\font\nt=cmr7
\def\note#1
\newtheorem{thm}{Theorem}
\newtheorem{prop}{Proposition} 
\newtheorem{rem}{Remark}
\newtheorem{defin}{Definition}
\theoremstyle{remark}
\newcommand{\x}{\times}
\newcommand{\C}{\mathbb C}
\newcommand{\D}{\Delta}
\newcommand{\e}{\varepsilon}
\newcommand{\R}{\mathbb R}
\newcommand{\U}{\mathcal U}
\newcommand{\V}{U_h(gl_2)}
\newcommand{\ue}{{\mathcal U}_{\varepsilon}}
\newcommand{\p}{\partial}
\newcommand{\Dt}{D_t}
\newcommand{\z}{\mathbb Z}
\newcommand{\pl}{{\mathbb R}^2}
\newcommand{\cc}{\mathcal C}
\newcommand{\rc}{\mathcal R}
\newcommand{\dc}{\mathcal D}
\newcommand{\oti}{\otimes}
\newcommand{\s}{\sigma}
\newcommand{\Ob}{\mathop{\mathrm{Ob}}\nolimits}
\newcommand{\id}{\mathop{\mathrm{id}}\nolimits}
\newcommand{\Hom}{\mathop{\mathrm{Hom}}\nolimits}
\begin{document}

\setlength{\baselineskip}{16pt}


\title[Invariants of tangles with flat connections]{Invariants of
tangles with flat connections in their complements}
\author{R. Kashaev}
\address{Section de math\'ematiques, Universit\'e de Gen\`eve
CP 240, 2-4 rue du Li\`evre, CH 1211 Gen\`eve 24, Suisse}
\email{Rinat.Kashaev@math.unige.ch}
\author{N. Reshetikhin}
\address{Department of mathematics, University of California,
Berkeley, CA 94720, USA}
\email{reshetik@math.berkeley.edu}
 \copyrightinfo{2000}
    {American Mathematical Society}
\keywords{}
\subjclass{}
\date{July, 2002}


\begin{abstract} Let $G$ be a simple complex algebraic group.
By using a notion of a $G$-category we define invariants of tangles with flat
$G$-connections in their complements. We also show that quantized
universal enveloping algebras at roots of unity provide examples
of $G$-categories.
\end{abstract}
\maketitle
\tableofcontents
\section*{Introduction}

A breakthrough in the theory of invariants of knots came with the
discovery of the Jones polynomial \cite{J} and its generalizations
(HOMFLY and Kauffman). Then it was shown in \cite{RT} that such
invariants as well as invariants of tangled graphs can be obtained from
quantized universal enveloping algebras of simple Lie
algebras.

In this paper we extend the construction of invariants of
links from \cite{RT} (for details see \cite{T-1}). We construct
invariants of tangles in $\pl\times[0,1]$
 with a flat connection in a principal $G$-bundle (where
$G$ is a simple complex algebraic group) over the complement of a
tangle. The invariant is a functor from the category of $G$-tangles
to a given $G$-category (see sections~\ref{s1}--\ref{s31} for
definitions).
The invariant of a tangle with a gauge class of flat connections in the
complement is defined as the image of the corresponding $G$-tangle.

In section~\ref{s1}  of this paper first we describe the moduli space of
flat connections in the complement to a tangle in ${\mathbb
R}^2\times [0,1]$. Then we define the category of tangles with
gauge classes of flat connections in their complements. The
construction is essentially the same as $\pi$-tangles introduced
and studied by Turaev in \cite{T-2}. In section~\ref{s3} we define the
category of
$G$-colored tangle diagrams and show that it is naturally equivalent to
the category of tangles with flat $G$-connections in their
complements.
In section~\ref{s31} we
introduce the notion of a $G$-category, which is very similar but
different
from that of \cite{T-2}. In section~\ref{sec4}
we construct invariants of $G$-tangles. In
sections~\ref{sec5}--\ref{sec7}
we show
that representations of $\ue$ provide examples of
$GL_2^*$-categories and therefore provide invariants of tangles
with $GL_2$-connections in their complements.

We are grateful to R. Kirby, D. Thurston, V. Turaev, and M. Yakimov
for interesting discussions and useful comments. Both authors were
partly supported by the NSF grant DMS-0070931 and by the CRDF
grant RM1-2244 and by the Swiss National Science Foundation.

\section{Tangles with flat $G$-connections
in their complements}\label{s1}

\subsection{Tangles and diagrams}\label{pigr}

Let $I\equiv[0,1]$ be the closed unit interval. A {\it geometric
tangle} $t\subset {\pl}\x I$ is the image of a  smooth embedding
$\underbrace{I\sqcup\ldots\sqcup I}_{k\ \mathrm{ times}} \sqcup
\underbrace{S^1\sqcup\ldots\sqcup S^1}_{l\ \mathrm{ times}} \to
\pl\x I$ with oriented components such that $\p t\subset {\pl}\x\p
I$, and $t$ intersects  the boundary $\pl\times \p I$
perpendicularly. We have $\p t=\p_+t \sqcup \p_-t$ where $\p_+
t=(\pl\times \{1\})\cap \p t$ and $\p_- t=(\pl\times \{0\})\cap \p
t$. The geometric tangle $t$ is called {\it standard} if $\p_+
t=\{(0,1,1),(0,2,1)\dots (0,n,1)\}$ and $\p_- t=
\{(0,1,0),(0,2,0)\dots (0,m,0)\}$ for some $n,m\in{\z}_{>0}$. We
will say that such tangle has type $(m,n)$.

Denote by $\Dt$ the image of $t$ under the projection
\begin{equation}\label{eq:proj}
p\colon {\pl}\x I\to {\mathbb R}\times I, \ (x,y,z)\mapsto(y,z)
\end{equation}
together with the additional information at each self-crossing point
about underpassing and overpassing segments.
The projection $\Dt$ is called {\it regular} if its only
singularities are double points at which
the images of the corresponding components of $t$
intersect transversally.
The ambient isotopy class of
a regular projection $\Dt$ is called a
\emph{diagram} of the tangle $t$.  When it is not confusing we
will use the same notation $D_t$ for
diagrams and regular projections.

The diagram of a tangle can be regarded as the isotopy class of a
 smooth embedding of a graph with four-valent
vertices of two types and one-valent vertices. These are the double points and
the boundary points, respectively. The edges of the graph are the segments
running between the vertices. A vertex of $D_t$ is called
\emph{positive} if it is a double point and the angle between
upper and lower components is positive. If the angle is negative,
the vertex is called \emph{negative}, see Fig.~\ref{pos-neg}. Each
such embedding is always a diagram of some tangle. Two diagrams
are called \emph{Reidemeister equivalent} if one can be obtained from
another by a finite sequence of Reidemeister moves
 (see Figs~\ref{fig-16}--\ref{fig-19}).

\begin{figure}[htbp]
\begin{center}
\subfigure[Negative intersection]{\scalebox{0.2}{\includegraphics{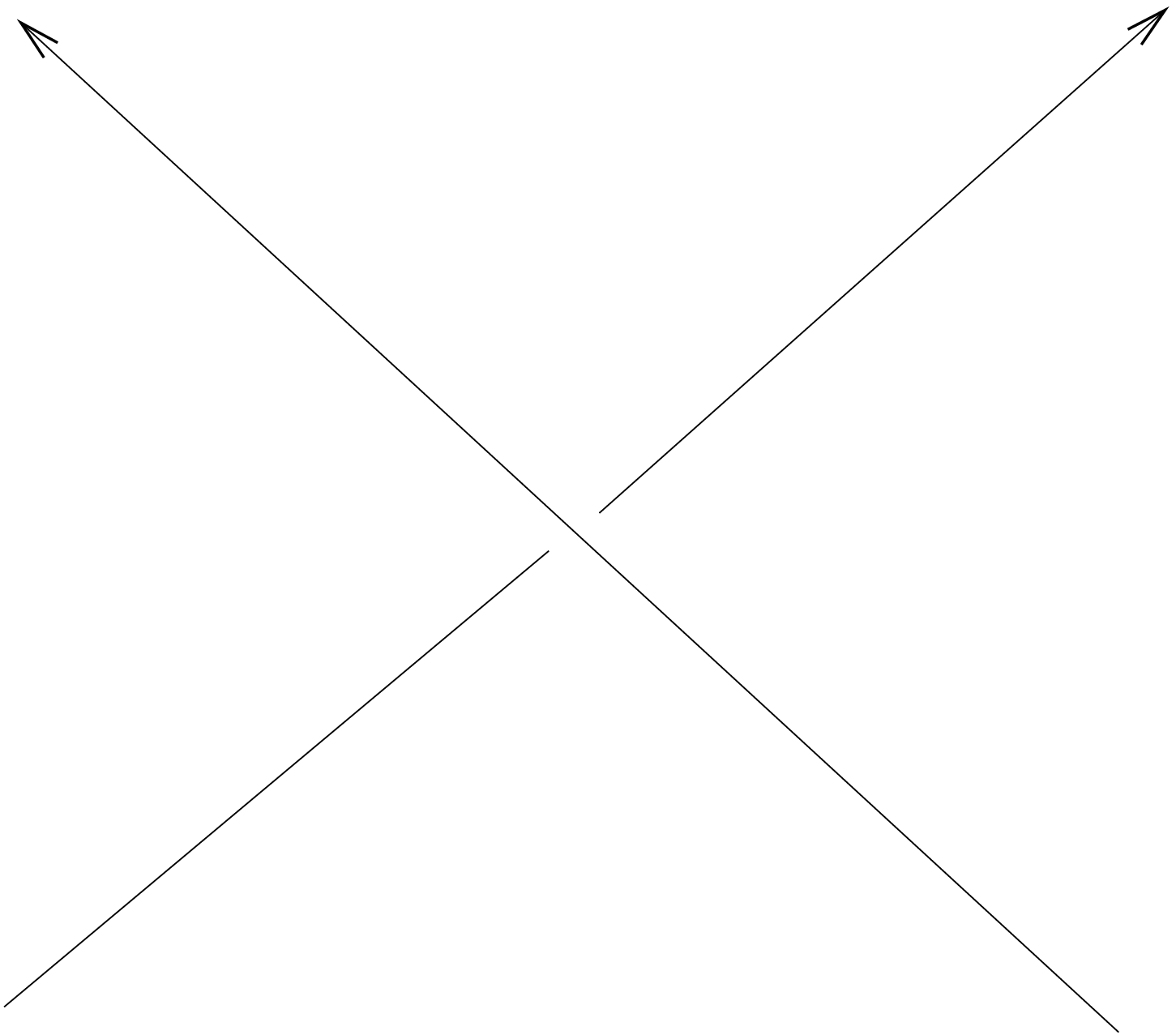}}}
\subfigure[Positive
intersection]{\scalebox{0.2}{\includegraphics{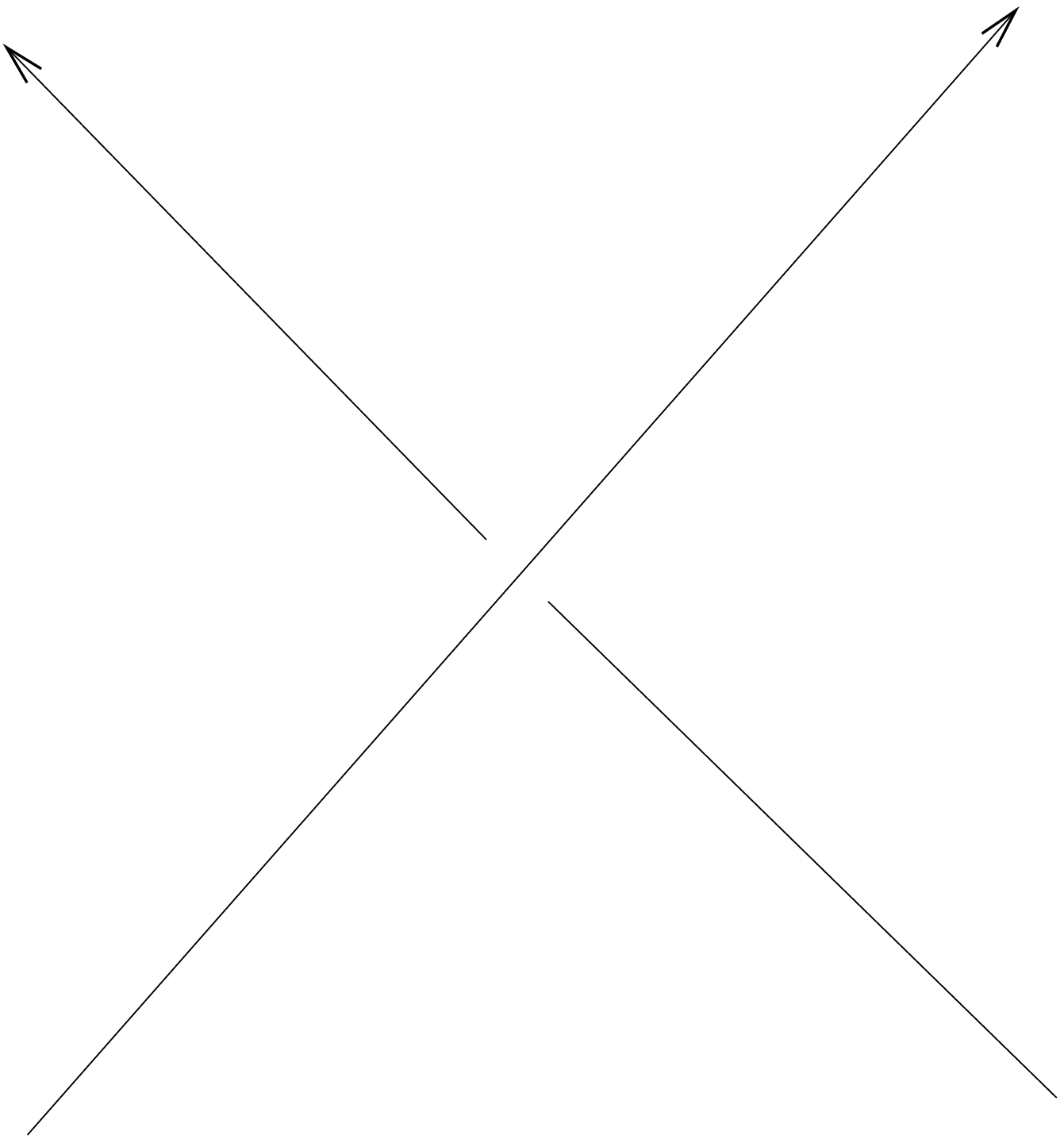}}}
\end{center}
\caption{}
\label{pos-neg}
\end{figure}

\begin{figure}[htbp]
\begin{center}
{\scalebox{0.4}{\includegraphics{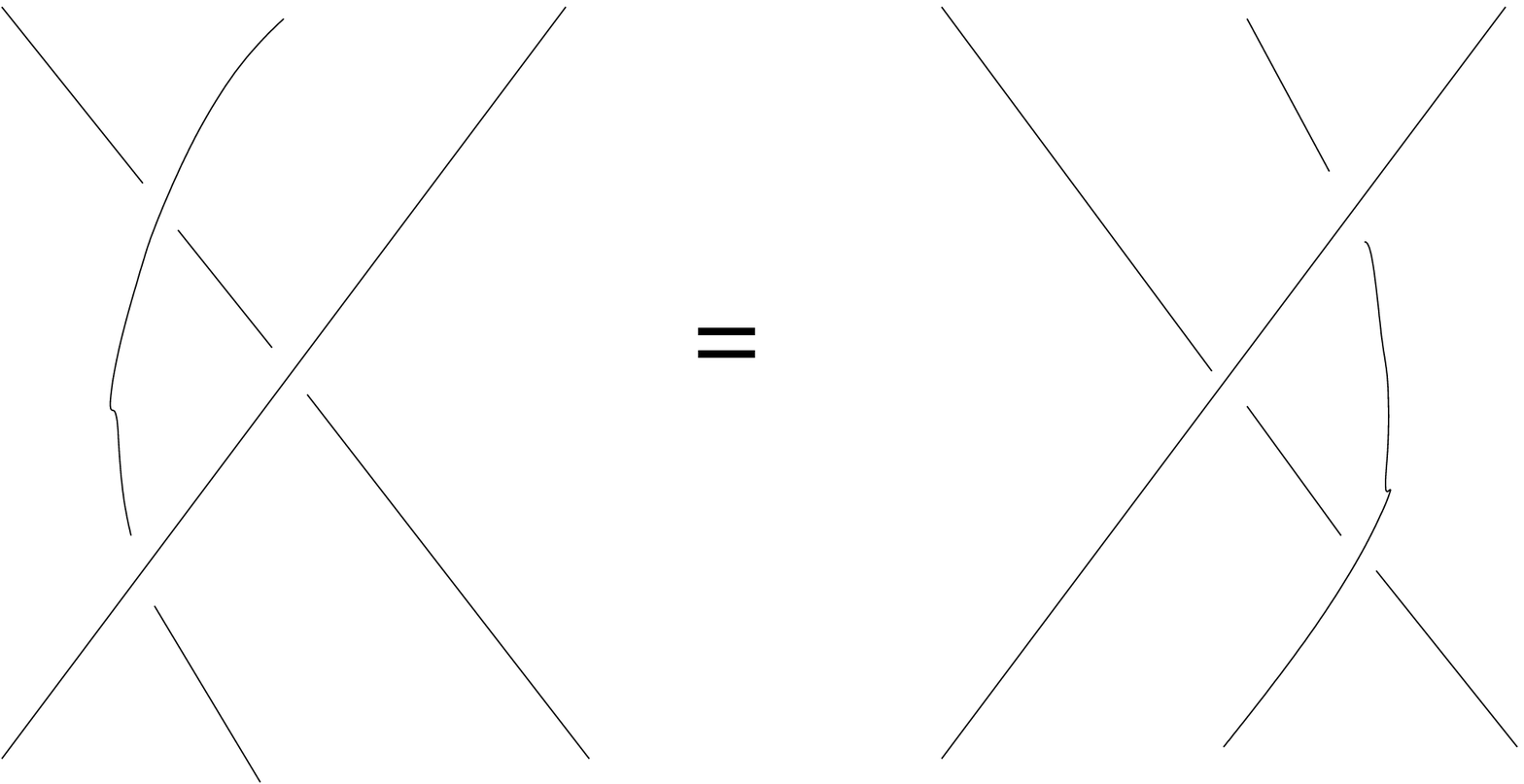}}}
\end{center}
\caption{}
\label{fig-16}
\end{figure}

\begin{figure}[htbp]
\begin{center}
{\scalebox{0.4}{\includegraphics{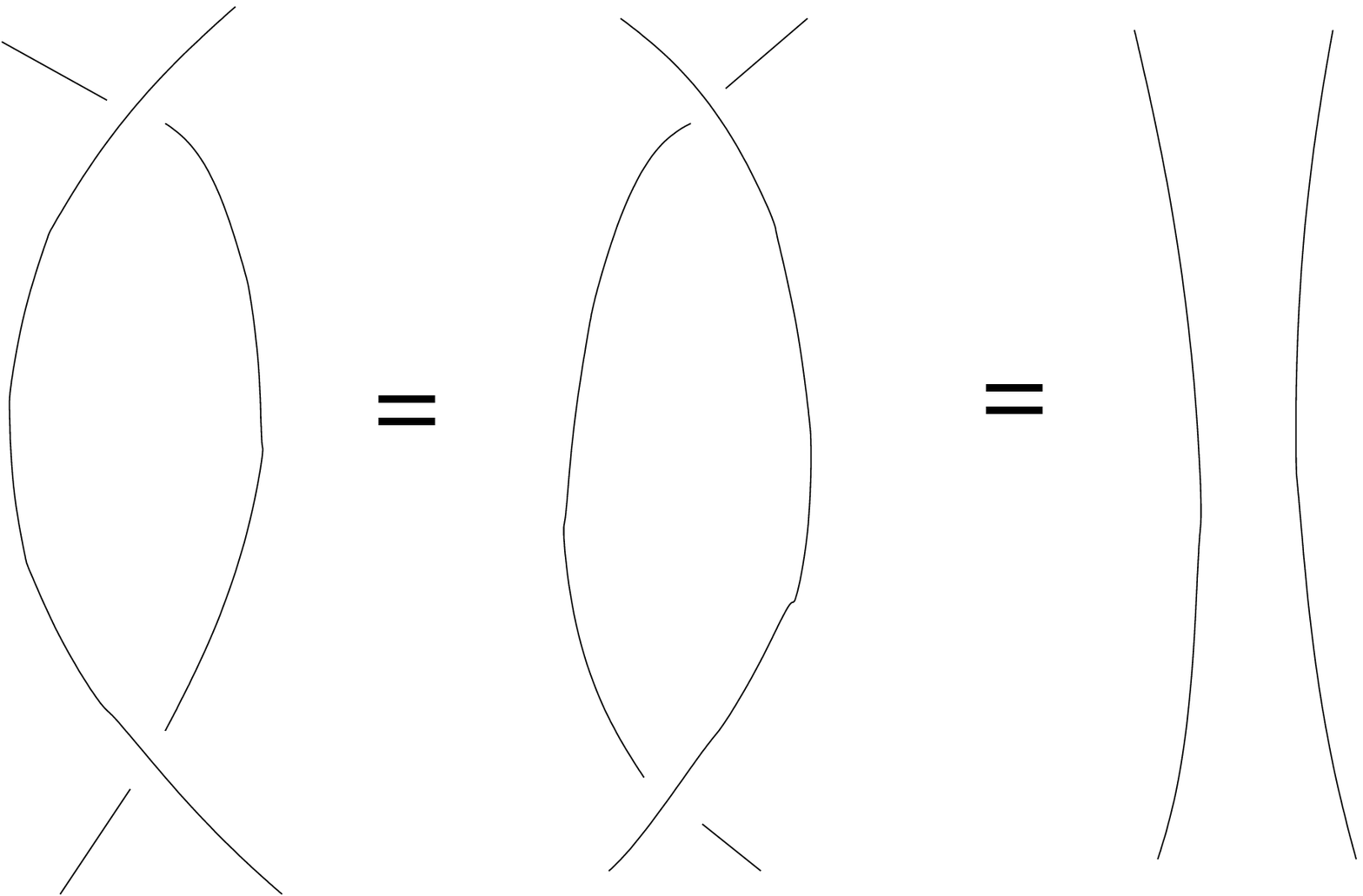}}}
\end{center}
\caption{}
\label{fig-17}
\end{figure}

\begin{figure}[htbp]
\begin{center}
{\scalebox{0.4}{\includegraphics{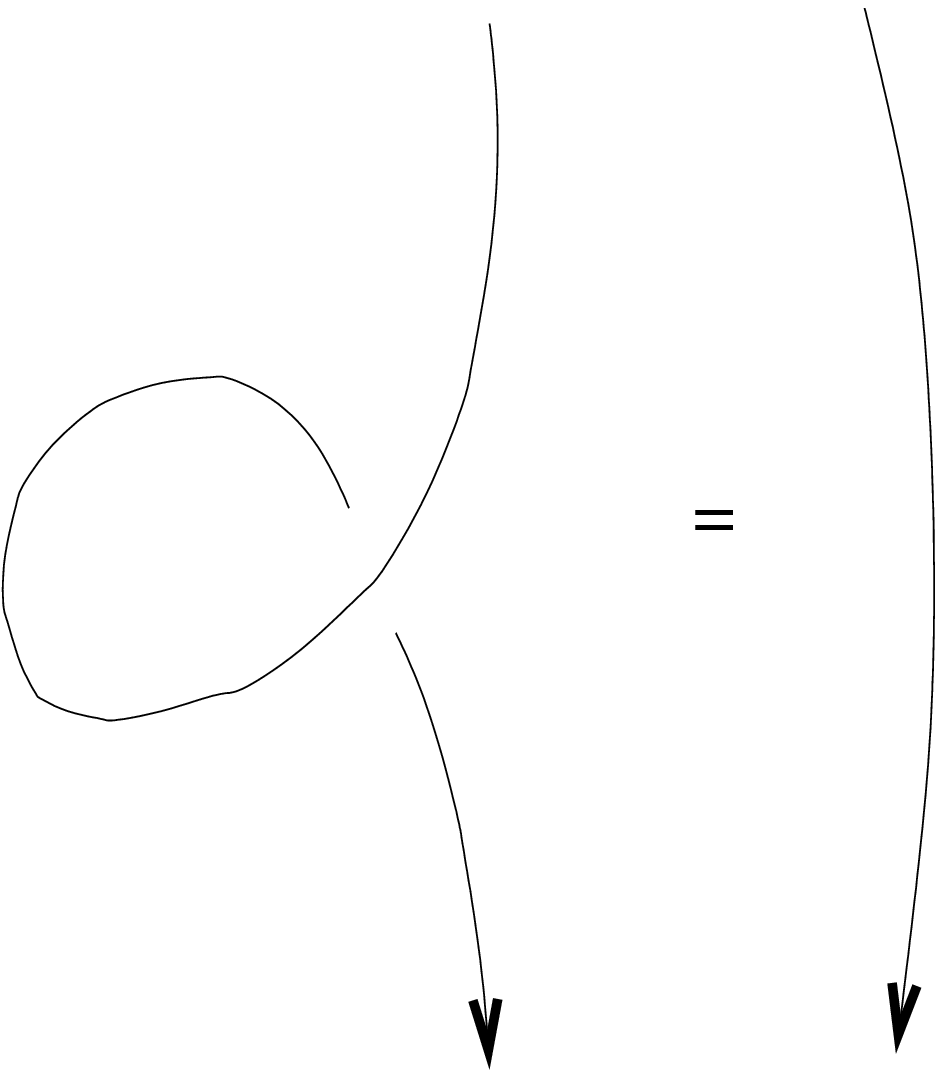}}}
\end{center}
\caption{\label{fig-19}}
\label{}
\end{figure}

A {\it tangle} is the isotopy class of a geometric tangle. Tangles are
in bijection with Reidemeister classes of diagrams.

A {\it geometric framing} of a geometric tangle $t$ is a
continuous nonsingular section of the normal bundle to $t$.
A framing of a tangle is the isotopy class of a
geometric framing. A framing is \emph{standard} if
it is parallel to the $x$-axis at $\partial_\pm t$ with
positive projection to this axis. It is clear that every framed
geometric tangle with such framing at the boundary is isotopically
equivalent to a framed geometric tangle with  the framing parallel
to $x$-axis. Framed tangle is the isotopy class of a framed
geometric tangle with the standard framing.

Two diagrams of tangles are called \emph{framed Reidemeister
 equivalent}
if they are connected by a sequence of moves in
Figs~\ref{fig-16},~\ref{fig-17},~\ref{fig-Rmove3}.

\begin{figure}[htbp]
\begin{center}
{\scalebox{0.4}{\includegraphics{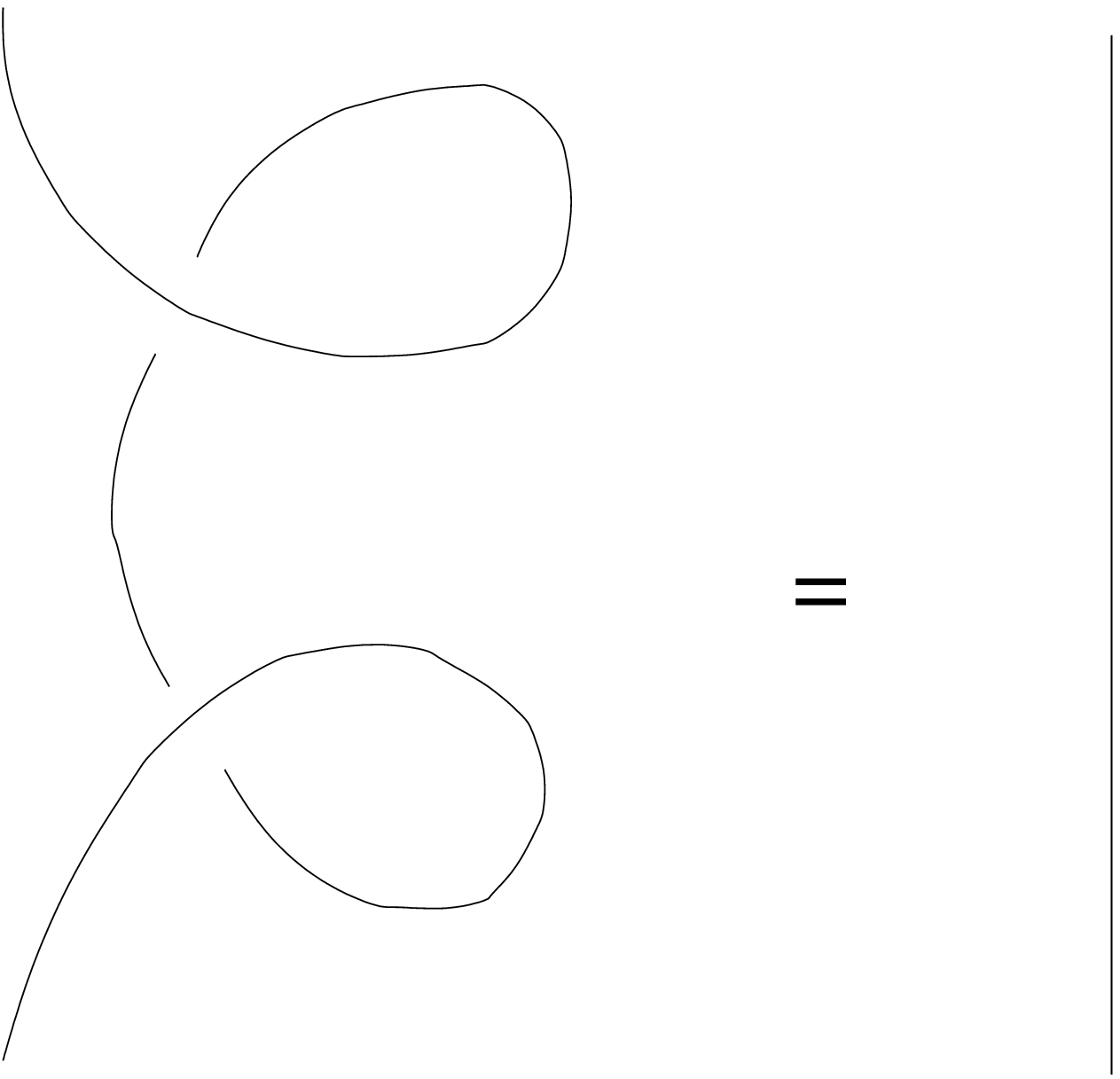}}}
\end{center}
\caption{}
\label{fig-Rmove3}
\end{figure}

Framed Reidemeister classes of diagrams are in bijection with
the framed tangles.

\subsection{Flat $G$-connections in the complement of a
tangle and $G$-tangles}

Let $E$ be a trivial principal $G$-bundle over $\pl\times I$ and
$A_t\in \Omega^1(\pl\times I\backslash t, {\mathfrak g})$ be the
1-form representing a flat connection in $E$ over the complement
to a standard geometric framed tangle $t$ of type $(m,n)$. The
corresponding  parallel transport operators along paths give an
equivalent description of the flat connection as a representation
of the fundamental groupoid  of $( \pl\times I) \backslash t$ in
$G$. \note{*}

Let $E_m$ be a trivial principal $G$-bundle over $\pl$,
 and $\alpha$ be the ${\mathfrak g}$-valued 1-form of  a flat connection in $E'$ over \note{*}
$\pl\backslash \{(0,1),\dots, (0,m)\}$.
Assume that $\alpha(x,y)$ decays sufficiently fast when $y$ goes to
$-\infty$ so that the parallel transport operator
of $\alpha$ along a path connecting
$(x_0,y_0)$ with $(x,y)$ in $\pl$  has a limit as  $y_0\to -\infty$. Let
$[\alpha]$ be the gauge class of $\alpha$ with respect
to gauge transformations trivial at $\infty$. Let $\gamma_i$ be a
path which starts at $(0,-\infty)$ encircles points $(0,1),\dots,
(0,i)$ and then returns to $(0,-\infty)$ (see Fig. \ref{lambda}).
We can identify the class $[\alpha]$ with an element of $G^m=G\times \dots
\times G$ so that the group element $g_i$ in the collection
$(g_1,\dots, g_m)\in G^m$ is the holonomy along the path $\gamma_i$.

\begin{figure}[htbp]
\begin{center}
{\scalebox{0.4}{\includegraphics{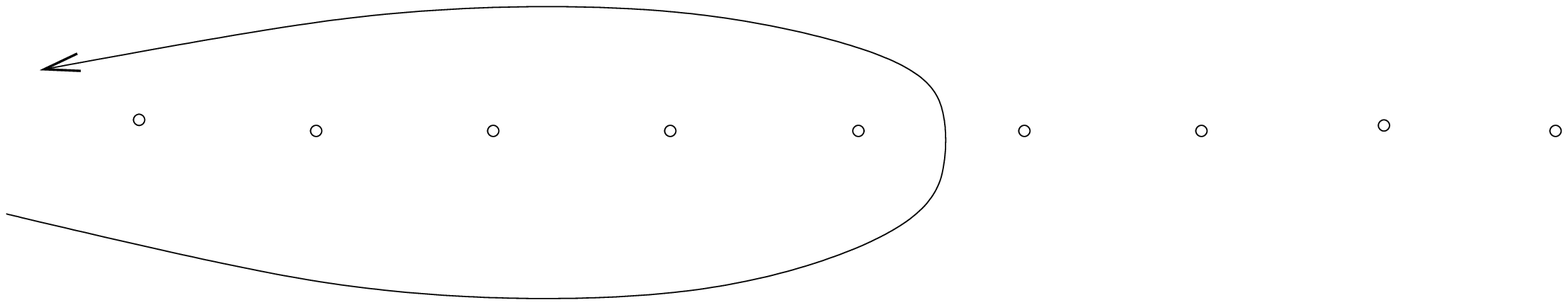}}}
\end{center}
\caption{}
\label{lambda}
\end{figure}

Let $\alpha$ and $\beta$ be flat connections in $E_m$ and $E_n$,
respectively. Denote by ${\mathcal A}_t(\alpha,\beta)$ the space
of flat $G$-connections over the complement of $t$ which, when
restricted to $\pl\times \{0\} \setminus\{(0,1,0),\dots,(0,m,0)\}$
and $\pl\times \{1\}\setminus\{(0,1,1),\dots,(0,n,1)\}$, coincide
with $\alpha$ and $\beta$, respectively, and which decay
sufficiently fast when $y\to -\infty$ (so that the parallel along
a path connecting $(x_0,y_0,z_0)$ and $(x,y,z)$ has a finite limit
when $y_0\to -\infty$).

If $G_t$ is the group of gauge transformations trivial at infinity,
the quotient space \\ ${\mathcal M}_t([\alpha], [\beta])= {\mathcal
A}_t(\alpha, \beta)/G_t$ is by definition the moduli space of those flat
$G$-connections in the complement of $t$ which, when restricted to
two boundary components $\pl\times \{0\}\setminus \partial t$
and $\pl\times \{1\}\setminus \partial t$, give
 representatives of the gauge classes $[\alpha]$ and $[\beta]$ respectively.

Following \cite{T-2}, denote $C_t=(\pl\times I)\backslash t$ and consider the
fundamental group of $C_t$ with the base point in $\{0\}\times
[-\infty, L]\times I$ with $t\subset \R\times
[L+1,\infty]\times I$. The set of such base points is contractible
and we will write $\pi_1(C_t)$ for the fundamental group,
suppressing the base point. We will call such base points left
base points. It is clear that an element of ${\mathcal
M}_t([\alpha], [\beta])$ defines a representation of $\pi_1(C_t)$.
Gauge classes of boundary values $[\alpha]$ and $[\beta]$ become
elements of $G^m$ and $G^n$ respectively, defined as holonomies
along $\gamma_i$ (see above). Thus, the space ${\mathcal
M}_t([\alpha], [\beta])$ can be identified
with a subspace of $G$-tangles \cite{T-2}, where a $G$-tangle is
defined as a pair consisting of
a framed oriented tangle $t$ and a group homomorphism
$\rho\colon\pi_1(C_t)\to G$.

Let $t$ and $t'$ be two standard isotopically equivalent geometric tangles.
An isotopy bringing $t$ to $t'$
lifts to an isomorphism between
$G$-connections over the complements of $t$ and $t'$ and therefore to
an isomorphism between corresponding moduli spaces of flat $G$-connections.
The isomorphism class of the moduli spaces generated by these isomorphisms
can be identified with the equivalence class
of pairs $(t, A_t)$ with respect to isotopies of tangles, their
pull-backs acting on connections and gauge transformations from $G_t$.
We will denote the obtained set of classes
by ${\mathcal M}_{[t]}([\alpha], [\beta])$.

From now on we will work with $G$-tangles. The geometric picture with
flat connections will be used only once below when
we shall assign a geometrical meaning to $G$-colorings of diagrams.

 Let
$v=(x,y,z)\in t\subset \pl\times I$ and $\gamma_v\subset C_t$ be a
(homotopy) path which starts at a left base point then goes to the point
$(x,y+\delta, z)$ "over" the tangle, then returns to the base
point "under" the tangle. Here $\delta$ is sufficiently small and
we assume that $t$ is transversal to $(x,y)$-plane  at $(x,y,z)$.
We will call such path (the homotopy class thereof) {\it standard} for
$v=(x,y,z)$.

\subsection{The category of $G$-tangles}
{\it Objects} of the category ${\mathcal T}(G)$ of $G$-tangles are
finite sequences $\{(\e_1, g_1),\dots, (\e_n, g_n)\}$ with
$\e_i=\pm 1$, \ $g_i\in G$.
{\it Morphisms } from $\{(\e_1, g_1),\dots, (\e_m, g_m)\}$ to \\
$\{(\sigma_1, h_1), \dots, (\sigma_n, h_n)\}$ are $G$-tangles.\\ Here
$m$ is the number of connected components of
$(\pl\times \{0\})\cap t$ and $n$ is the number of connected components of
$(\pl\times \{1\})\cap t$. The signs $\e_i$ and
$\sigma_i$ show the orientation of the boundary components. If $''+''$,
the component is oriented upward and if $''-''$, it is oriented downward.
The representation $\rho: \pi_1(C_t)\to G$ should agree with
$\{g_i\}$ and $\{h_i\}$  in the following way. If $\gamma^+_i$ is
a path from a left base point encircling boundary points $(0,1,0),
\dots (0,i,0)$ in the vicinity of $\pl\times \{0\}$ then $\rho(
\gamma^+_i)=g_i$. Similarly for a path $\gamma^-_i$ encircling
points $(0,1,1),\dots, (0,i, 1)$ in the vicinity of $\pl\times
\{1\}$ we have $\rho(\gamma^-_i)=h_i$.

The composition of morphisms is defined by gluing tangles.
The identity morphism of \\ $\{(\e_1, g_1),\dots, (\e_n, g_n)\}$ to
itself is the trivial braid with the representation of the
fundamental group defined by $g_1, \dots, g_n$.

\begin{rem}
The category ${\mathcal T}(G)$ can also be defined in more
geometrical terms of gauge classes of flat connections. {\it
Objects} are sequences $(\e_1,\dots,\e_n;[\alpha])$ where
$[\alpha]$ is a gauge class of a flat connection over $\pl
\backslash \{(0,1)\dots (0,n)\}$ in the trivial principal
$G$-bundle over $\pl$ and $\e_i=\pm 1$. {\it Morphisms} between
$(\e_1,\dots,\e_m;[\alpha])$ and $(\sigma_1,\dots, \sigma_n,
[\beta])$  are elements of $ {\mathcal M}_{[t]} ([\alpha],
[\beta])$  i.e. equivalence classes of pairs $(t, A_t)$ described
in the previous subsection.
\end{rem}

\section{The category of $G$-colored diagrams}\label{s3}
\subsection{Factorizable groups and Lie groups}

We say that group $G$ is {\it factorizable}
 into two subgroups $G_\pm\subset G$ if any element $g\in G$
can be represented in a unique way as
\begin{equation}\label{la-factor}
g=g_+g_-^{-1}
\end{equation}
where $g_\pm\in G_\pm$.

If $G$ is a complex algebraic Lie group (later we will focus on
this case) we will say that it is factorizable if there exists a
Zariski open neighborhood $G'\subset G$ of $1$ such that every
element of $G'$ has a unique factorization (\ref{la-factor}).
Notice that in this case any $l\in \mathfrak{g}=Lie(G)$ has a
unique decomposition
\[
l=l_+ -l_-
\]
where $l_\pm\in \mathfrak{g}_\pm=Lie(G_\pm)$.

\begin{rem} We can choose
$G_+=\{e\}$ and $G_-=G$. We call it trivial factorizability.
\end{rem}

Let $G$ be a factorizable group. Define a binary
operation
\[
g\star h=g_+h_+(g_-h_-)^{-1}
\]
which obviously defines a group structure on $G$ with the
same identity element as for the original group structure. The
inverse of $g$ in this group is $i(g)=g_+^{-1}g_-$. This operation
corresponds to the multiplication of the group $G_+\times G_-$
under the mapping
$G_+\times G_- \to G$, \ $(g_+,g_-)\mapsto g_+(g_-)^{-1}$. In what
follows, we shall denote this group $G^*$.

\subsection{$G$-colorings of diagrams}
Let $t$ be a standard geometric tangle and $D_t$, its diagram with
the set of edges $E(D_t)$.
Assume that $G$ is a factorizable group.

\begin{defin}\label{fact-rel}
The map $E(D_t)\to G$ which associates to edge $e$
element $x_e\in G$ is called a $G$-coloring of
diagram $D_t$ if at each double point it  satisfies the relations
$$
  x_{b_v} = (x_{a_v})^{-1}_\pm x_{c_v} (x_{a_v})_\pm\, , \quad
    x_{a_v} = (x_{c_v})_\mp x_{d_v} (x_{c_v})_\mp^{-1}\, ,
$$
depending on whether the intersection is  positive or negative.
Here the enumeration of edges is the same as on
Fig.~\ref{fig-enum}.
\end{defin}

\begin{figure}[htbp]
\begin{center}
{\scalebox{0.4}{\includegraphics{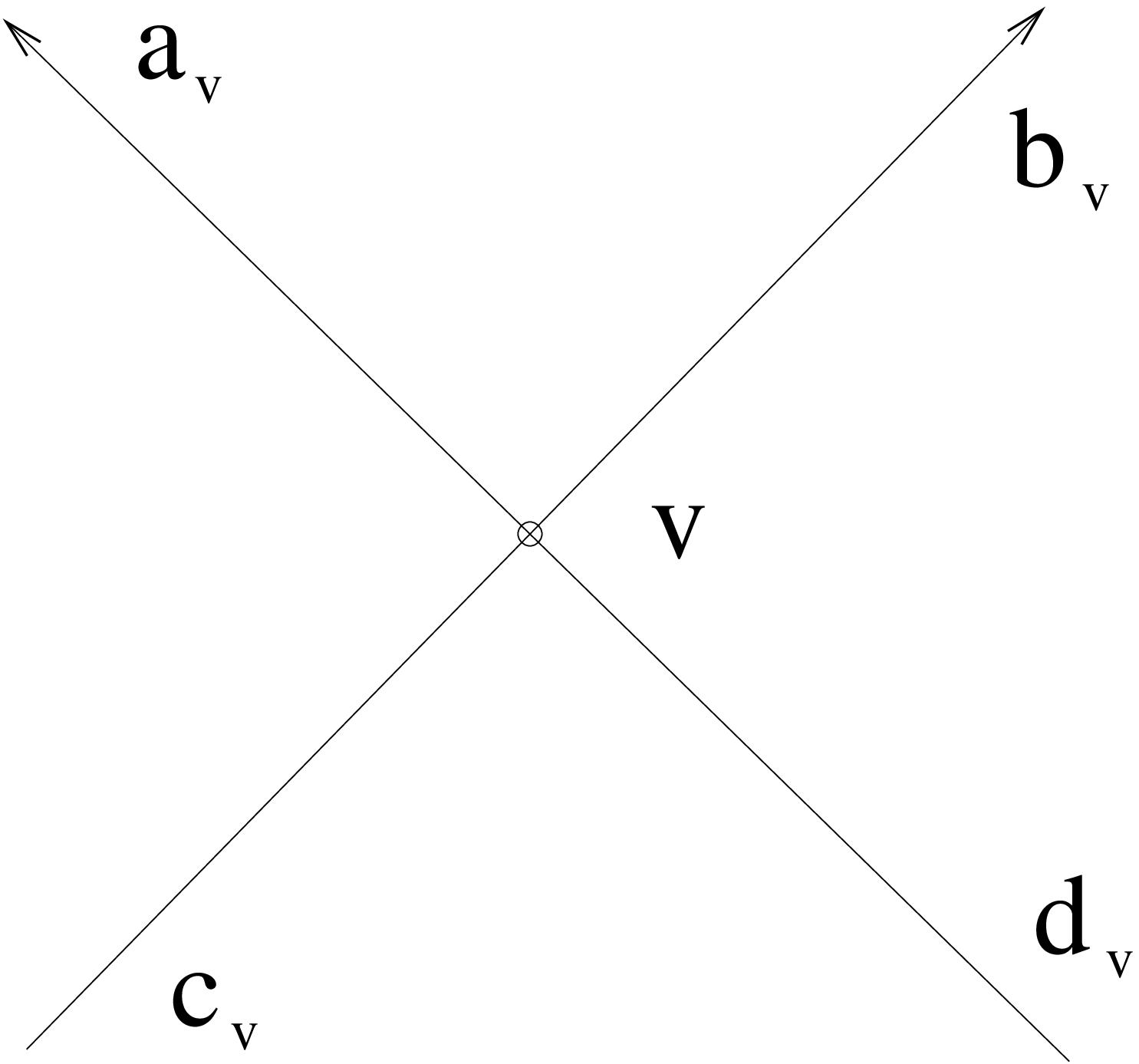}}}
\end{center}
\caption{}
\label{fig-enum}
\end{figure}

Let $x_L, x_R\colon G\times G\to G$ be mappings acting as
\begin{equation}\label{xLR}
x_L(x,y)=x_-yx_-^{-1}, \quad  x_R(x,y)=x_L(x,y)_+^{-1}x x_L(x,y)_+
\end{equation}

In terms of these maps the definition above means that at positive
double points we have $(x_a,x_b)=(x_L(x_c,x_d),x_R(x_c,x_d))$
and at negative double points $(x_c,x_d)=(x_L(x_a,x_b),x_R(x_a,x_b))$.

The following proposition is due to  Weinstein and Xu \cite{WX}
in the context of factorizable Poisson--Lie groups.

\begin{prop} \label{mathcalR}The map ${\mathcal R}\colon
G\times G\to G\times G$
acting as $(x,y)\mapsto
(x_L(y,x), x_R(y,x))$ satisfies the set-theoretical Yang--Baxter
equation:
\[
{\mathcal R}_{12}{\mathcal R}_{13}{\mathcal R}_{23}
={\mathcal R}_{23}{\mathcal R}_{13}
{\mathcal  R}_{12}
\]
\end{prop}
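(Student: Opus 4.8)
The plan is to read the two output coordinates of $\mathcal R$ as conjugations prescribed by the factorization \eqref{la-factor}, and then to reduce the Yang--Baxter equation to the compatibility (``matched pair'') identities of that factorization. Writing $(x',y')=\mathcal R(x,y)$, formula \eqref{xLR} says that $x'=x_L(y,x)=y_-\,x\,y_-^{-1}$ is the conjugate of $x$ by the factor $y_-\in G_-$, while $y'=x_R(y,x)=(x')_+^{-1}\,y\,(x')_+$ is the conjugate of $y$ by the factor $(x')_+\in G_+$ extracted from the first output. Thus $\mathcal R$ pushes a $G_-$-conjugation into the left slot and a $G_+$-conjugation into the right slot, the two being linked through the factorization of $y_-xy_-^{-1}$. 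My claim is that the only structural input needed is how these factors propagate when one conjugation is performed after another, and that this propagation is governed entirely by the uniqueness in \eqref{la-factor}.

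Concretely, I would first record the elementary cross-action produced by the factorization: for $u\in G_-$ and $v\in G_+$ the product $uv$ has a unique expression $uv=\tilde v\,\tilde u$ with $\tilde v\in G_+$ and $\tilde u\in G_-$, the assignment $uv\mapsto(\tilde v,\tilde u)$ being exactly the passage between $G_-G_+$ and $G_+G_-$ implicit in the $\star$-structure and in the group $G^*$ defined above. Factoring $x'=y_-xy_-^{-1}=y_-x_+x_-^{-1}y_-^{-1}$ with this rule and $y_-x_+=\tilde x_+\tilde y_-$ shows that $(x')_+=\tilde x_+$ is precisely the $G_+$-part of $y_-\,x_+$, so that $y'$ is the conjugate of $y$ by that part. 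Both sides of the Yang--Baxter equation then send a triple $(x,y,z)$ to a triple of iterated conjugations of $x$, $y$, $z$ by words in the various factors; expanding each side with the cross-action rule and pushing every $G_-$-factor to the left and every $G_+$-factor to the right places each side in a normal form. Matching the two normal forms coordinate by coordinate then uses only the uniqueness of \eqref{la-factor}, applied separately in $G_+$ and in $G_-$. As a geometric sanity check (not logically needed here), this identity is the algebraic shadow of the third Reidemeister move for the crossing colorings of Definition~\ref{fact-rel}: the two triple-crossings compute the same reordered holonomies.

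The hard part will be the bookkeeping of the maps $g\mapsto g_\pm$, which are \emph{not} homomorphisms: controlling $(gh)_\pm$, and the factors of a conjugate $(ugu^{-1})_\pm$, in terms of the pieces of $g$ and $u$ is the genuine content. I would isolate this in a single cocycle-type lemma expressing the cross-action $(u,v)\mapsto(\tilde v,\tilde u)$ and its associativity --- in effect the matched-pair axioms, i.e. associativity of the double built from $G_+$ and $G_-$ --- after which the Yang--Baxter equation becomes a formal rearrangement. A secondary point requiring care is that, in the algebraic-group case, \eqref{la-factor} holds only on a Zariski-open $G'\ni 1$, so I must check that all intermediate products stay in the domain where the factorization, and hence $\mathcal R$, is defined; restricting to a neighborhood of the identity, or working formally near $1$, suffices for the identity of maps asserted in the proposition.
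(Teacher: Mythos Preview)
The paper does not actually prove this proposition: it attributes the result to Weinstein and Xu \cite{WX} and uses it without argument, offering only the explanation of the notation $\mathcal R_{ij}$. So there is no ``paper's own proof'' to compare against for this particular statement.

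Your plan is correct and is essentially the standard one. Reading $\mathcal R$ as a pair of conjugations governed by the cross-action $G_-\times G_+\to G_+\times G_-$ coming from uniqueness of the factorization \eqref{la-factor}, and then reducing the Yang--Baxter equation to the associativity/cocycle identities of this matched-pair structure, is exactly the Weinstein--Xu mechanism. Your identification $(x')_+=\tilde x_+$ via $y_-x_+=\tilde x_+\tilde y_-$ is correct and is the key computation that makes the bookkeeping tractable. The caveat about the Zariski-open domain in the algebraic case is appropriate and handled the right way.

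One remark that may shorten your write-up: later in the paper, in the proof of Theorem~\ref{dg}, the authors verify (and declare ``easy to check'') the three braided-group identities for $\rc$ in $G^*$, namely $m\circ\rc=m'$ and the two compatibilities with $m\times\id$ and $\id\times m$. As noted in Section~\ref{s31}, these three identities formally imply the set-theoretical Yang--Baxter equation. So an alternative, and slightly more economical, route is to prove those three identities directly (each is a one-line factorization check in $G^*$) and then invoke the general implication, rather than chasing all three coordinates through both sides of the YBE. Your matched-pair lemma is equivalent content, just organized differently.
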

Here all mapping act from $G^{\times 3}$ to itself. The mapping
${\mathcal R}_{12}$ acts as $\mathcal R$ in the first two factors
and trivially in the last one. The other mappings act in a similar
way. The subindices indicate in which factors the mapping acts
non-trivially.

It follows from this proposition that if the $G$-coloring of lower edges of
both diagrams on
Fig.~\ref{fig-16} are $x, y, z$, then the colorings of upper edges
of the diagrams (which are determined by the diagrams and the coloring
of lower edges) are the same for both diagrams. In other words, the
$G$-coloring
is compatible with the third Reidemeister move. It is easy to see that
the $G$-coloring is also compatible with other framed Reidemeister moves.

Let $(D,c)$ and $(D',c')$ be two $G$-colored diagrams
which are Reidemeister equivalent. Then, since in each Reidemeister
move the coloring of a  new
diagram is uniquely defined by the coloring of the initial
diagram, $c'$ is uniquely determined by $c$.
Thus, we have Reidemeister classes of $G$-colored diagrams.

\subsection{The category of $G$-colored diagrams}\label{ss4}

Let $G$ be a factorizable group.
Define category ${\mathcal D}(G)$ of $G$-colored diagrams  as
follows.

{\it Objects} of the category are sequences $\{(\e_1,x_1),\dots,(\e_n,x_n)\}$
where $\e_i=\pm$ and $x_i\in G$ and the empty set.

{\it Morphisms} between  $\{(\e_1,x_1),\dots,(\e_n,x_n)\}$ and
$\{(\sigma_1,y_1),\dots,(\sigma_m,y_m)\}$ are Reidemeister classes
of $G$-colored diagrams with the orientation of the boundary edges
(adjacent to 1-valent vertices) defined by $\e_i$ and $\sigma_j$
as it is shown on Fig. ~\ref{fig-10} and with the $G$-colorings of
the boundary edges given by $\e_i(x_i)$ and $\s_j(y_j)$ where
$\e(x)$ is defined as
\begin{equation}\label{invex}
\e(x)=\left\{ \begin{array}{cc} x=x_+x_-^{-1} & \mbox{for $\e=+1$}
\\ i(x)= x_+^{-1}x_- & \mbox{for $\e= -1$}
\end{array}
\right.
\end{equation}
The operation $x\to i(x)$ is taking the inverse in $G^*$. The
identity morphism is shown on Fig.~\ref{fig-11}.

{\it Composition} of Reidemeister classes of $G$-colored
diagrams $(D_1,c_1)$ and $(D_2,c_2)$ is the
Reidemeister class of the $G$-colored diagram $(D_1\circ D_2, c)$
where $D_1\circ D_2$ is the diagram obtained by gluing $D_1$
and $D_2$ and the coloring $c$ is induced by colorings $c_1$
and $c_2$, 

\begin{figure}[htbp]
\begin{center}
{\scalebox{0.4}{\includegraphics{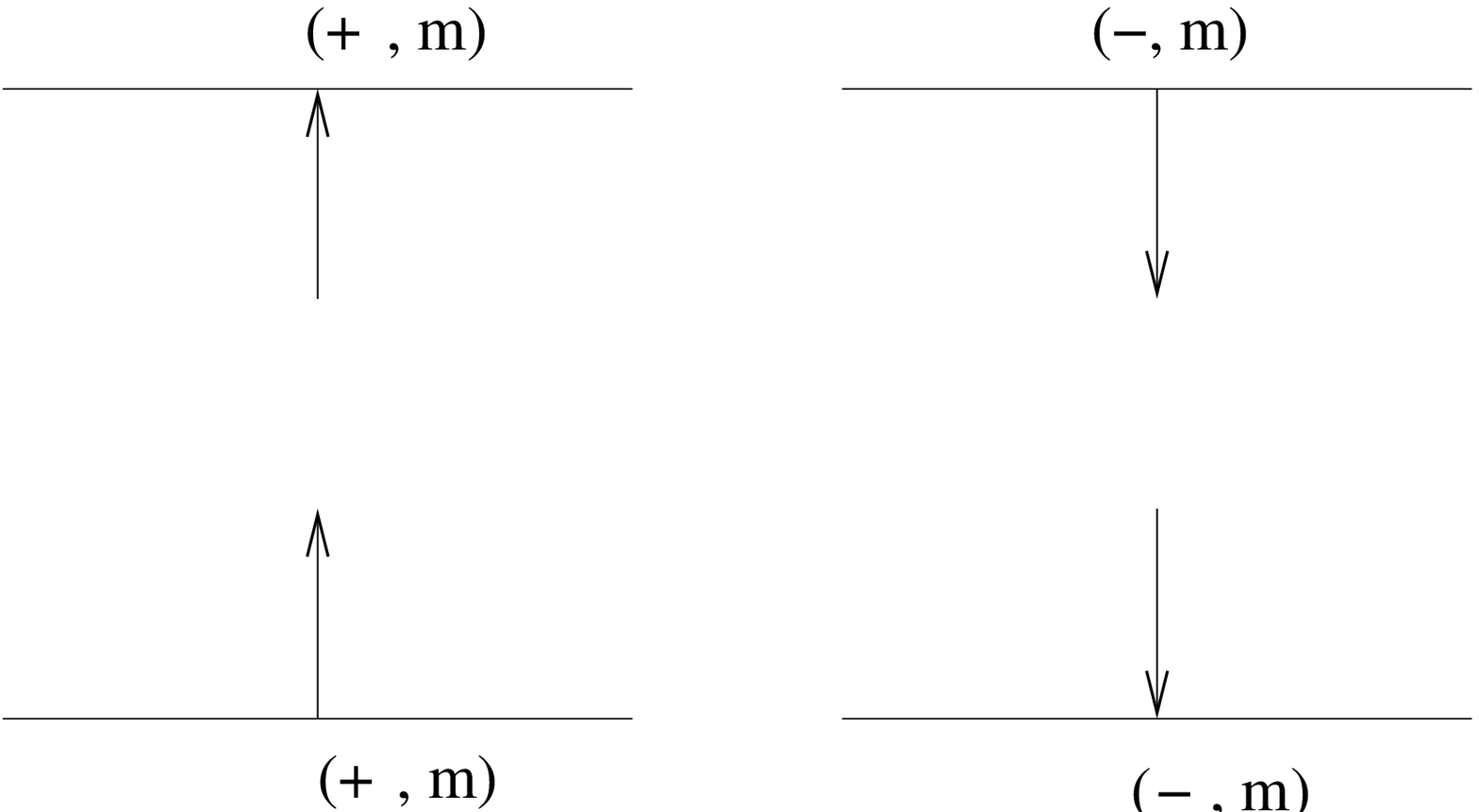}}}
\end{center}
\caption{}
\label{fig-10}
\end{figure}

\begin{figure}[htbp]
\begin{center}
{\scalebox{0.4}{\includegraphics{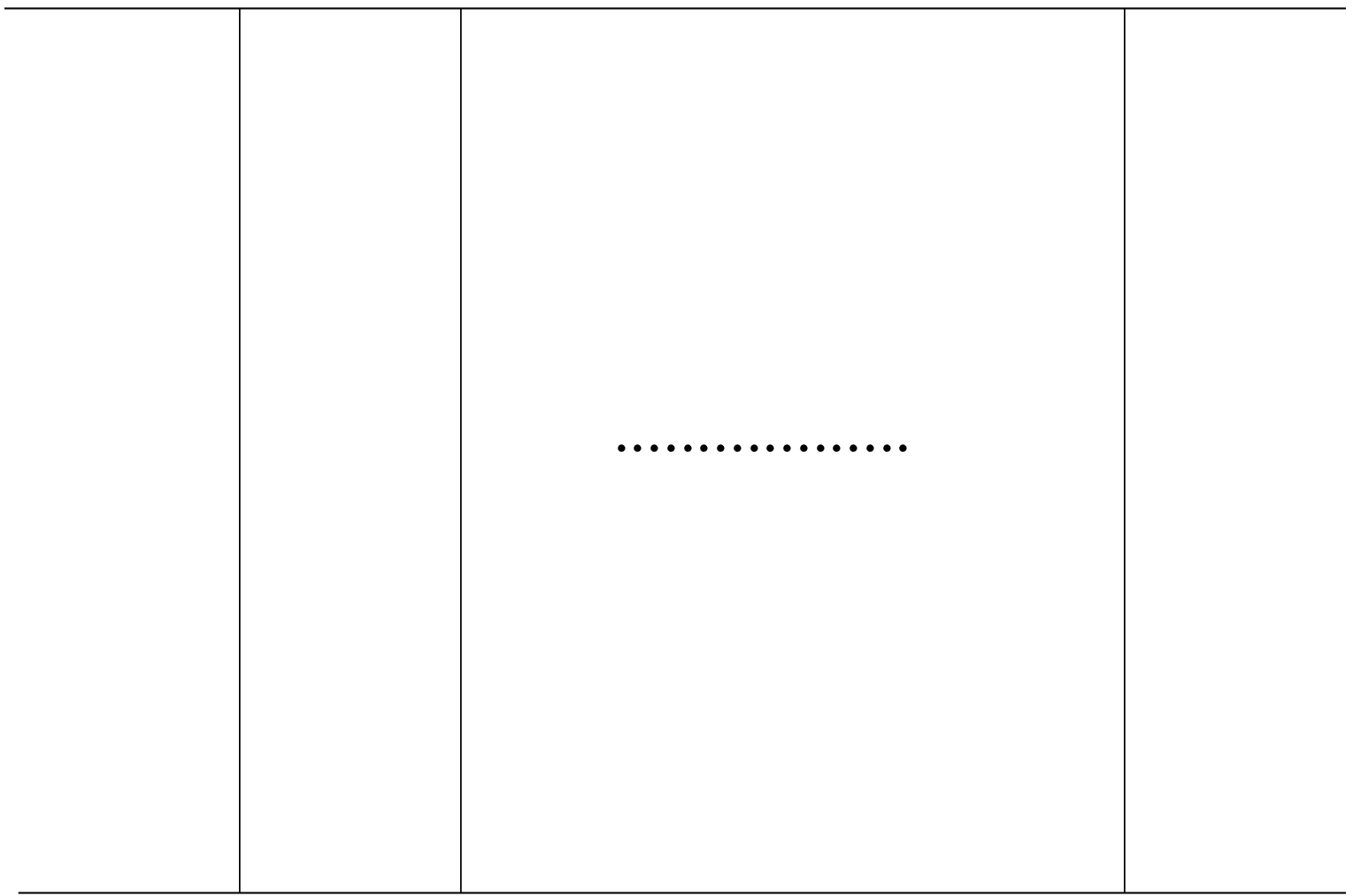}}}
\end{center}
\caption{}
\label{fig-11}
\end{figure}

\subsection{The equivalence of categories}
Here we will prove the equivalence of categories
${\mathcal T}(G)\simeq {\mathcal D}(G)$. Consider the map $F:
{\mathcal T}(G)\to {\mathcal D}(G)$ acting on objects as
\[
F((\e_1,g_1),\dots, (\e_n,g_n))=\{(\e_1, x_1)\dots (\e_n, x_n)\}
\]
Here $x_i\in G$ are related to $g_i\in G$ via
\begin{equation}\label{g-x}
g_i=(x_1)_+^{\e_1}\dots(x_i)_+^{\e_i}(x_i)_-^{-\e_i}(x_1)_-^{-\e_1}
\end{equation}

For a $G$-tangle $(t,\rho)$ define
\[
F((t,\rho))=[(D_t,c)]
\]
where $D_t$ is a diagram of the tangle $t$, $c$ is the coloring of
$D_t$ which we define below, and $[(D_t,c)]$ the colored framed
Reidemeister class of $(D,c)$.

Consider a standard
path $\gamma_v$ associated with point $v\in t$. Let
$e_1,\ldots,e_i$ be the edges of $D_t$
intersected by the projection of $\gamma_v$, and $\e_1,\ldots, \e_i$
the signs of the projections of their orientations to the vertical
axis. Then  the holonomy $g_v=g_i$ along $\gamma_v$ associated to
$\rho$ is given by formula~\eqref{g-x}, where $x_1,\ldots, x_i$ now
are the colors of edges $e_1,\ldots,e_i$. It is easy to see that this
correspondence between $\rho$ and the edge colors is one to one, and
thus the mapping $F$ is invertible.

It is easy to see that the map $F$ a functor.
To prove this it remains to show that $F(fg)=F(f)F(g)$
for morphisms $f$ and $g$, which is obvious.
It is also clear that this functor is an equivalence of
categories.

\subsection{ A geometric version of the functor $F$} Now consider a
geometric description of the category ${\mathcal T}(G)$ and describe
the functor $F$ in this terms.

To define the action of $F$ on objects we consider a special representative
$\tilde{\alpha}$ of the gauge class $[\alpha]$. Namely,
$\tilde{\alpha}$ is continuous and vanishes outside the  strips
$\{(x,y)|\
x\in {\mathbb R},\ i-\epsilon<y<i+\epsilon\}$ for some
$0<\epsilon<1/2$. Let $\gamma_i^{\pm}$ be paths connecting points
$(0,i-1/2)$ and $(0, i+1/2)$ which go around $(0,i)$ in the
clock-wise direction for $\gamma^-_i$ and in the counter clock-wise
direction for $\gamma^+_i$. These paths are shown as $(-)$ and $(+)$
paths respectively on  Fig. \ref{Paths}.

\begin{figure}[htbp]
\begin{center}
{\scalebox{0.4}{\includegraphics{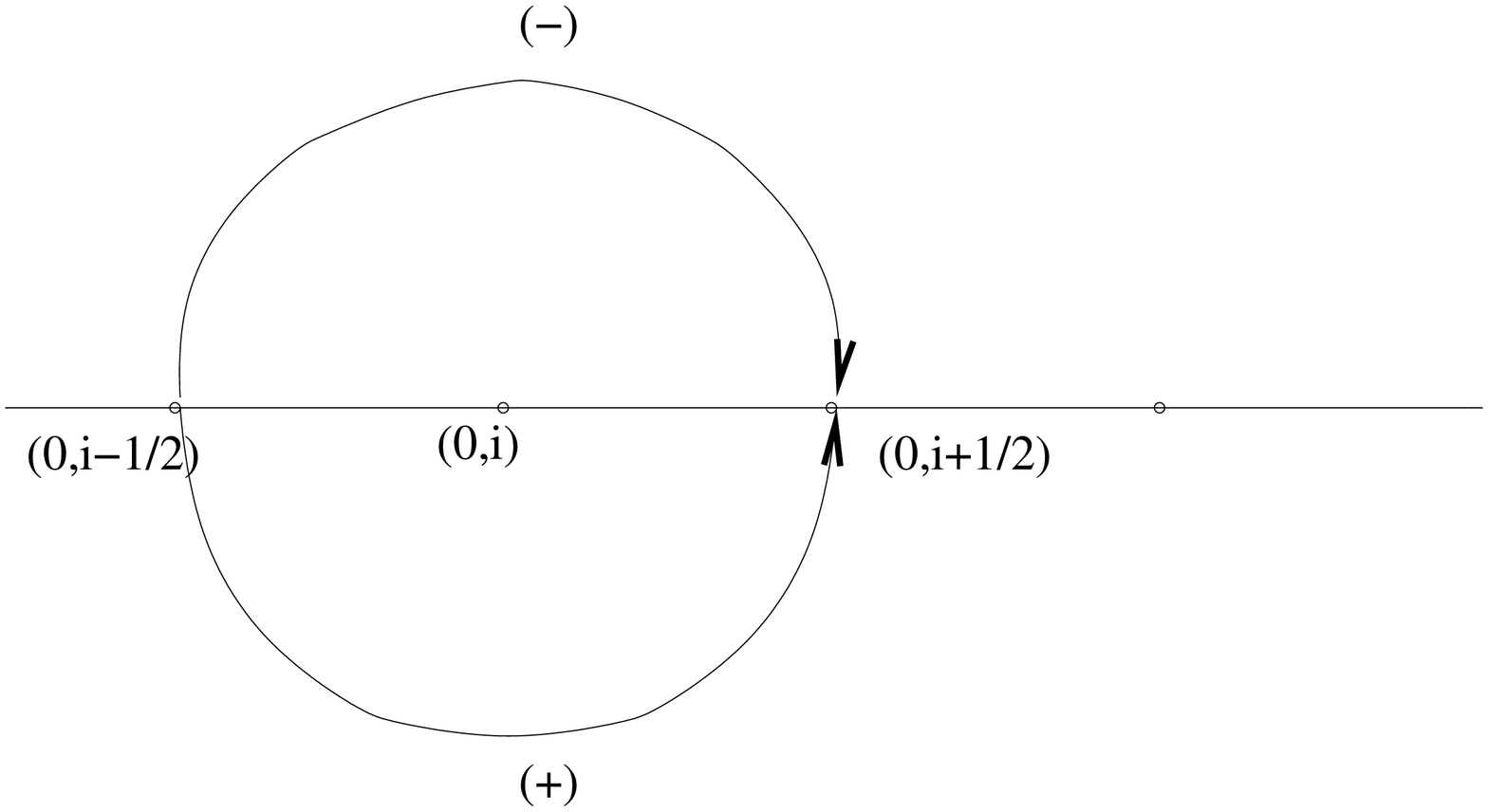}}}
\end{center}
\caption{}
\label{Paths}
\end{figure}

Define the action of $F$ on objects as:
\[
F:(\e_1,\dots,\e_n; [\alpha])\mapsto \{(\e_1,x_1),\dots, (\e_n,x_n)\}
\]
where $x_i=hol_{\gamma_i}(\tilde{\alpha})$ and
$\gamma_i=\gamma_i^+\circ(\gamma_i^-)^{-1}$.

To define the action of $F$ on morphisms, first,
let us look at the geometry of the projection $p$. The preimages
of edges of $D_t$ form a system of ``walls'' $p^{-1}(e)\subset \pl\times I,
\ e\in E(D_t)$, intersecting at
lines which are preimages of the vertices of $D_t$.
Let $A_t$ be a flat connection over the complement of $t$. If the
tangle is not a link all chambers bounded by walls including the
"outer" chambers are simply connected so the flat connection $A_t$
can be trivialized inside these chambers, except a thin
neighborhood of walls. If the tangle is a link, the outer chamber
is not simply connected, but we still can trivialize the flat
connection inside this chamber since the link can be placed into a
ball and the outside of a ball is a simply connected region in
$\pl\times I$. Thus, we can trivialize the flat connection in each
chamber outside of a thin neighborhood of walls. Assume that this
thin neighborhood is such that at the boundary it is inside of
strips $x\in \R, \ i-\epsilon <y<i+\epsilon$.
The tangle $t$ separates each wall into two semi-infinite parts.
Let $e$ be an edge of $D_t$, denote by $p^{-1}(e)_-$ the part of
the wall $p^{-1}(e)$ which is semi-infinite in the negative
$x$-direction and by $p^{-1}(e)_+$ the other part of this wall.
Since the connection is flat and since now it is trivial inside
the chambers (except of a thin neighborhood of walls), the
holonomy through the wall $p^{-1}(e)$ along any path that is based
on a pair of points separated by this wall and which are outside
of a thin neighborhood of the wall, depends only on the homotopy
class of the path. Let us call the path positive if its
orientation together with the orientation of the edge $e$ and with
the direction of the projection $p$ form positively oriented
triple of vectors in ${\mathbb R}^3$ with the standard
orientation. This produces two elements $x_\pm(e)$ of $G$ which we
can assign to the edge $e$ corresponding to holonomies along
positive paths through $p^{-1}(e)_\pm$.
The product $x(e)=x_+(e)x_-(e)^{-1}$ is the holonomy along a
closed path that crosses first the wall $p^{-1}(e)_+$ in the
positive direction and then $p^{-1}(e)_-$ in the negative
direction. If $G$ is factorizable there exists unique pair
$x(e)_\pm$ which factorizes $x(e)$ as above with $x(e)_\pm \in
G_\pm$, and one can choose the connection $A_t$ so that $x_\pm(e)=x(e)_\pm$.
This gives us an assignment $e\mapsto x(e)$ of group elements to
edges of the diagram. Notice that this assignment does not depend
on base points inside chambers and depends only on the gauge class
of the flat connection.

The holonomy along a path connecting two points based inside
chambers is the product of holonomies through the walls
intersected by this path. Given a tangle $t$ and its diagram
$D_t$ these holonomies can be computed by the {\it wall crossing
rule} (see Fig. \ref{wall-cr}). Horizontal edges on Fig.~\ref{wall-cr}
represent parts of the path. Vertical edges
represent parts of the tangle (walls). Under-crossings and
over-crossings show whether the path went through $p^{-1}(e)_-$ or
$p^{-1}(e)_+$  respectively where $e$ is the corresponding edge of
the diagram. The holonomy gained at the crossing is given in
terms of the $G$ coloring of $e$.

To show that the map assigning group elements to edges constructed
above gives $G$-colorings, one should consider pairs of paths from
Figs~\ref{mon-1}--\ref{mon-4}. Isotopy equivalence of these paths
implies the equality of the corresponding holonomies. Computation
of them according to the ``wall-crossing rules'' described above
gives the identities in the definition of the $G$-coloring.

\begin{figure}[htbp]
\begin{center}
{\scalebox{0.4}{\includegraphics{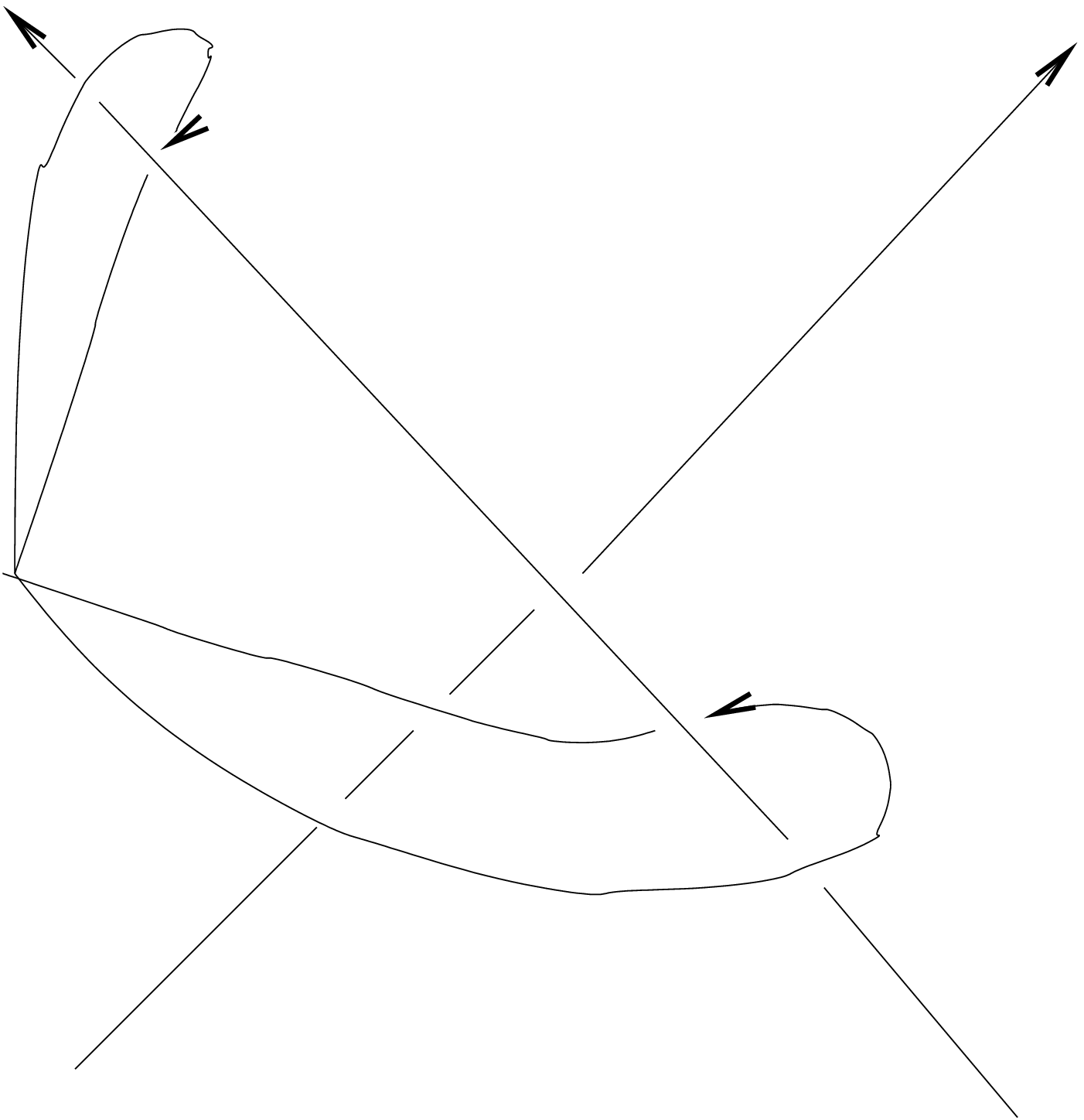}}}
\end{center}
\caption{}
\label{mon-1}
\end{figure}

\begin{figure}[htbp]
\begin{center}
{\scalebox{0.4}{\includegraphics{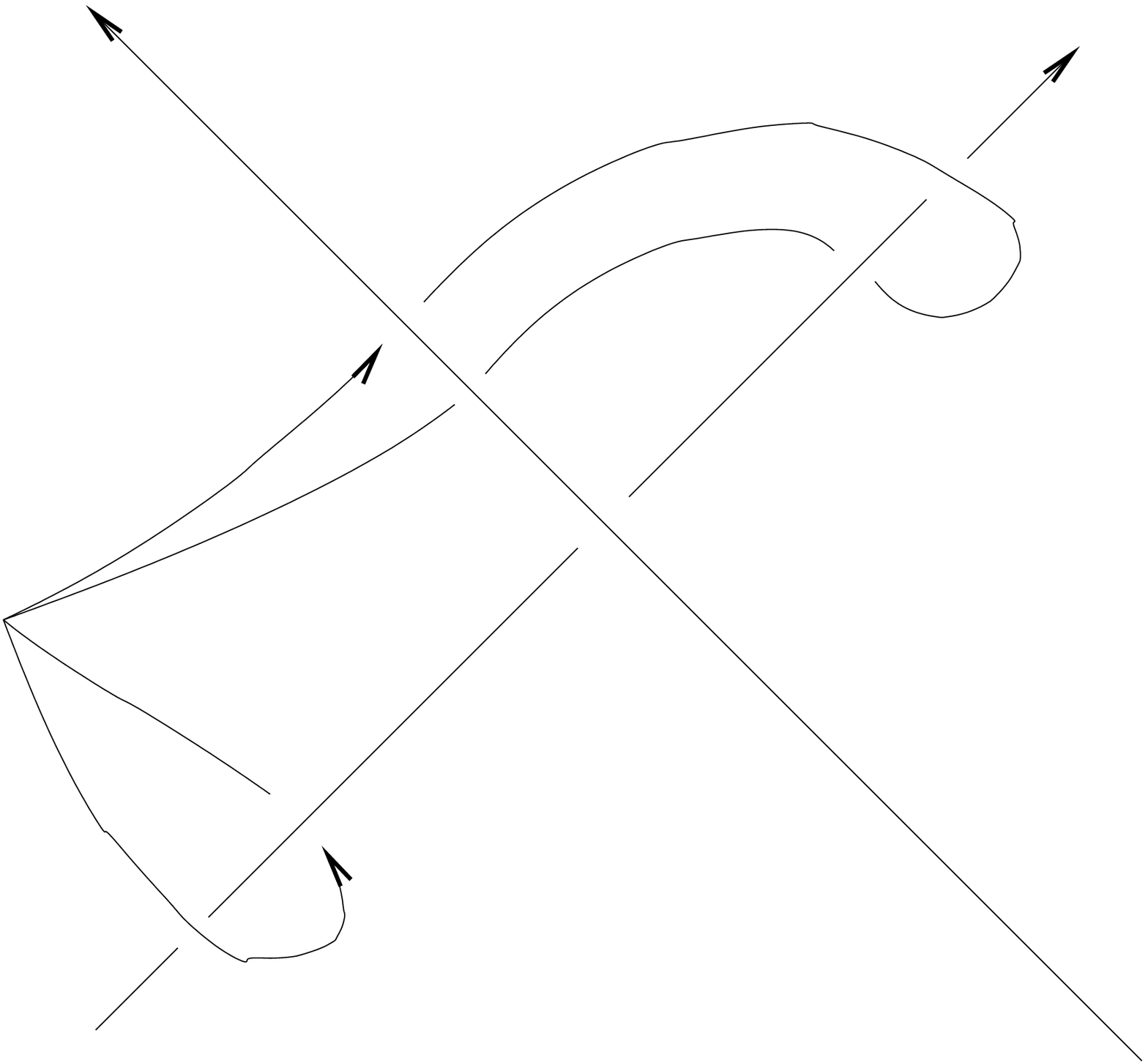}}}
\end{center}
\caption{}
\label{mon-2}
\end{figure}

\begin{figure}[htbp]
\begin{center}
{\scalebox{0.4}{\includegraphics{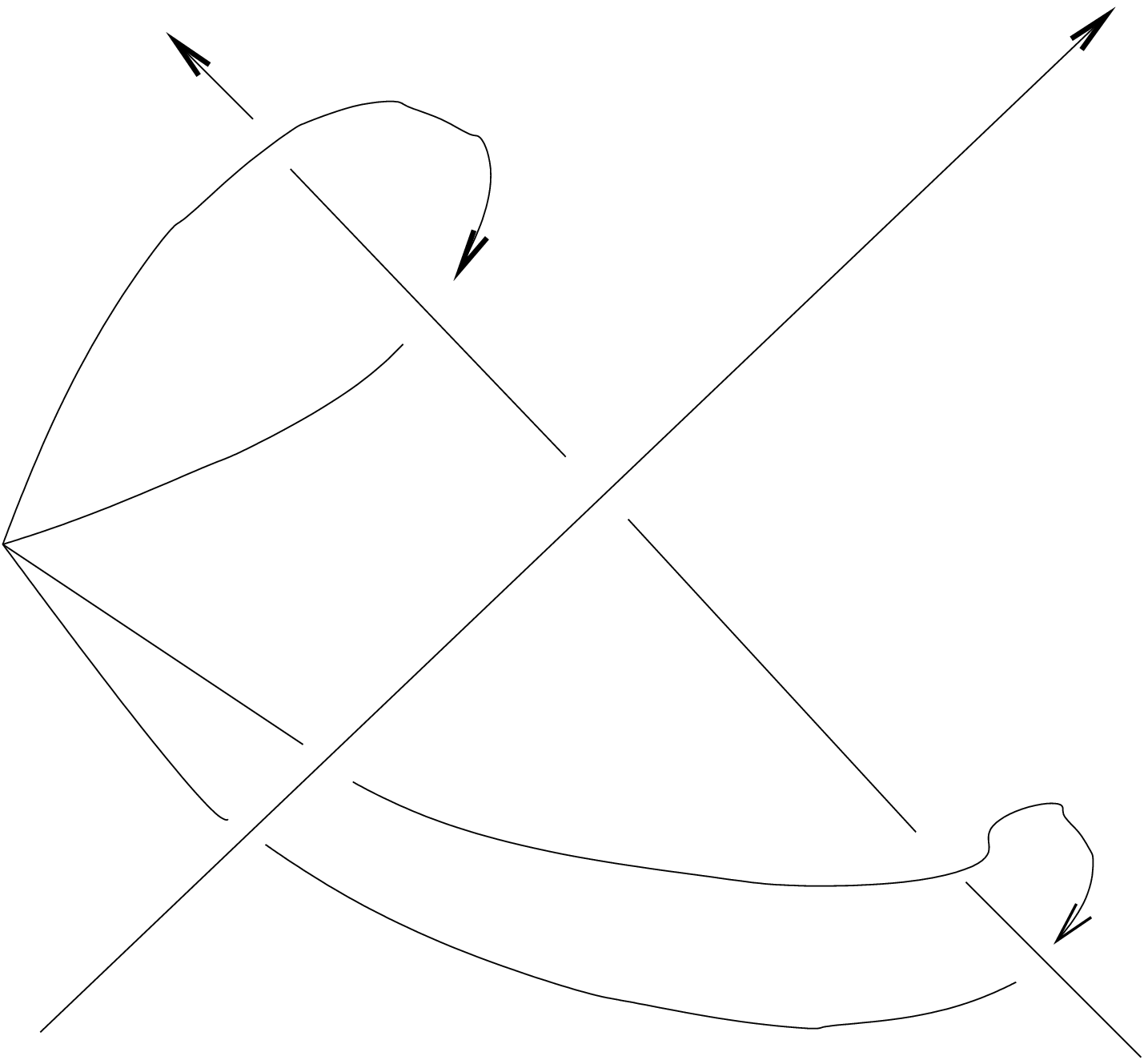}}}
\end{center}
\caption{}
\label{mon-3}
\end{figure}

\begin{figure}[htbp]
\begin{center}
{\scalebox{0.4}{\includegraphics{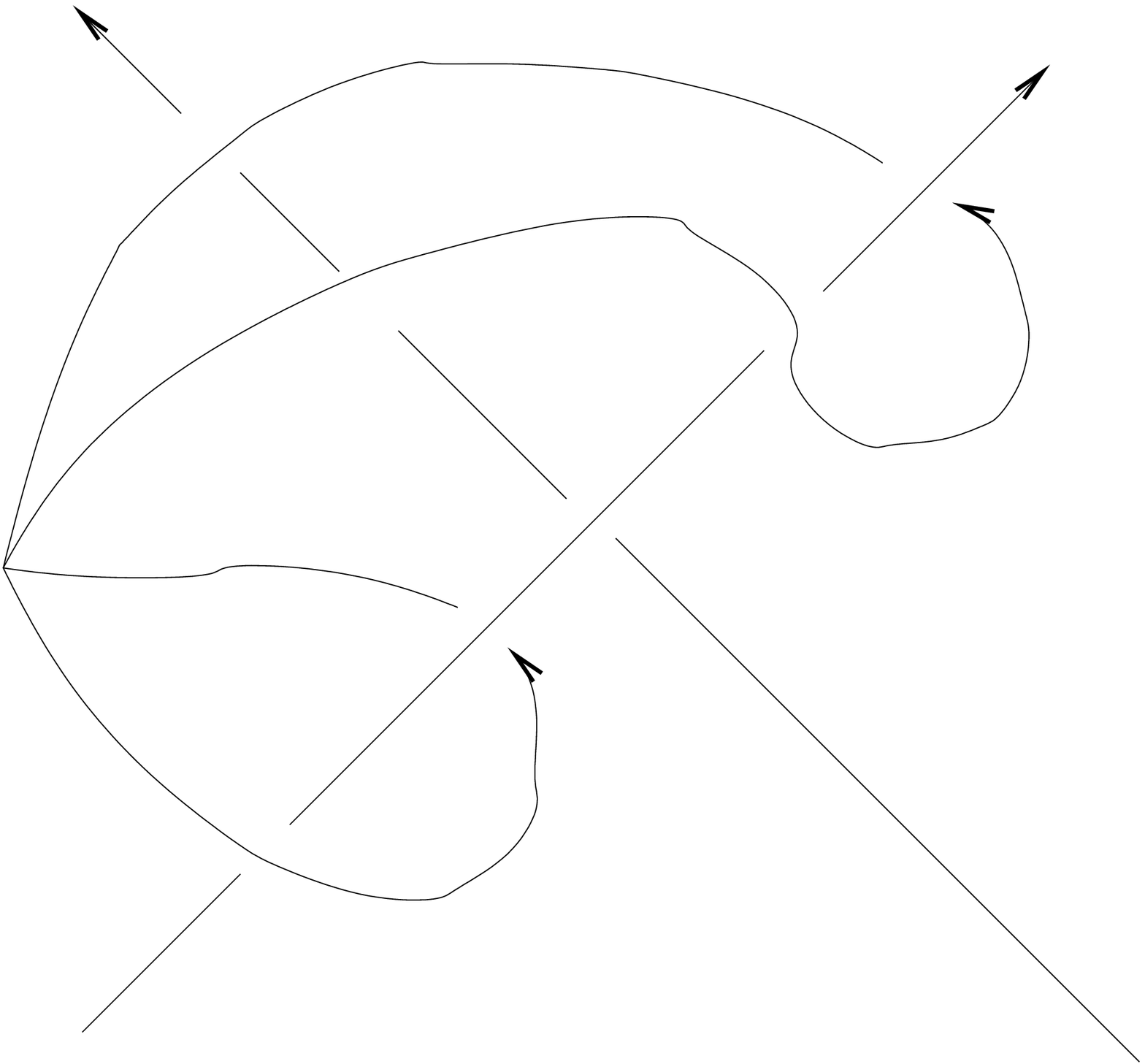}}}
\end{center}
\caption{}
\label{mon-4}
\end{figure}

\begin{figure}[htbp]
\begin{center}
{\scalebox{0.4}{\includegraphics{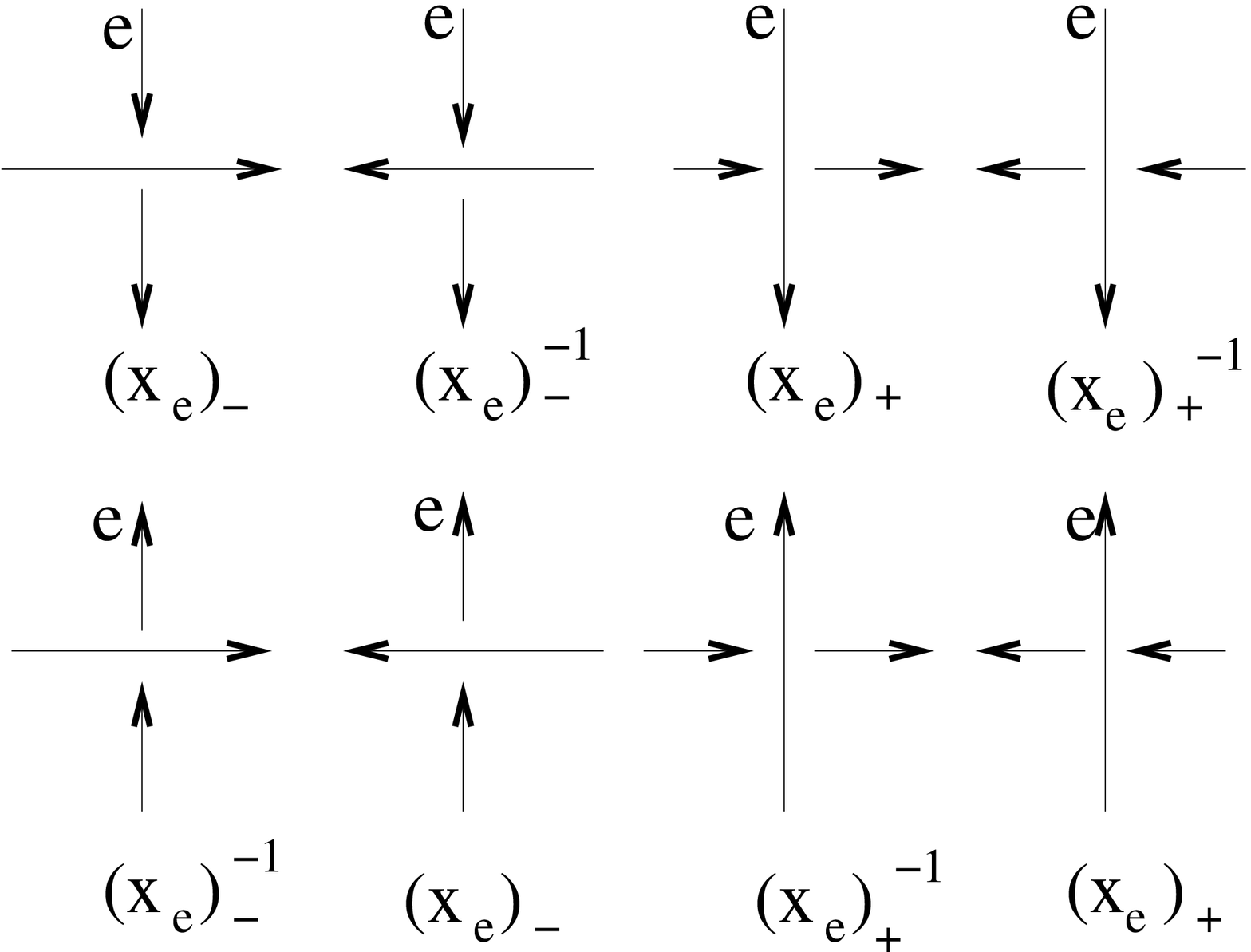}}}
\end{center}
\caption{}
\label{wall-cr}
\end{figure}

\section{Braided $G$-categories}\label{s31}

\subsection{Braided $G$-categories}

A  \emph{braided group} is a pair $(G,\rc: G \x G \rightarrow G \x G)$
where $G$ is a group and the
map $\rc$ satisfies the following requirements
\begin{enumerate}
\item $m \circ \rc = m'$
\item $\rc \circ (m \x \id) = (m \x \id) \circ \rc_{23} \circ \rc_{13}$
\item $\rc \circ (\id \x m) = (\id \x m) \circ \rc_{12} \circ \rc_{13}$
\end{enumerate}
where $m$ is the multiplication and $m'$ is the opposite multiplication in $G$.
In particular, $\rc$ satisfies the set-theoretical
Yang--Baxter equation
\[
\rc_{12} \circ \rc_{13} \circ \rc_{23} = \rc_{23} \circ \rc_{13}
\circ \rc_{12}.
\]
An example of a braided group is the pair $(G^*,\rc)$ where $G^*$ is a
 factorized
group $G$ with multiplication $m(x,y)=x\star y$ and
$\rc$ is given by (\ref{xLR}).

For a set $A$ we will say that category {\it $\cc$ is fibered over $A$} if
\begin{itemize}
\item There is a projection $\pi: \Ob(\cc) \rightarrow A$
\item $\Hom_{\cc}(X,Y) = \emptyset$, if $\pi(X) \ne \pi(Y)$
\end{itemize}

Recall that a monoidal category is a category with a functor
$\otimes : \cc\times \cc \to \cc$. This functor is given together
with the natural transformations $a: {otimes}\circ{\otimes\times
id}\simeq \otimes\circ{id \times \otimes}$ which is called an
associativity constraint. This natural transformation should
satisfy the pentagon identity \cite{ML}. In addition to this, a
monoidal category has an identity object $1\!\!1$ given together with
a system of functorial isomorphisms $l_X: 1\!\!1\otimes X\simeq X$
and $r_X: 1!\!1\otimes X\simeq X$ which satsfy some natural
conditions \cite{ML}.

A monoidal category fibered over a group $G$ is called a {\it
$G$-category} if
\begin{itemize}
\item $\pi(X \otimes Y) = \pi(X)\pi(Y)$
\item $\pi(1\!\!1) = e \in G$
\end{itemize}

Associativity constraint and functorial morphisms $r_X$ and $l_X$
should act fiber-wise. From now on we will work only with strict
monoidal categories: we assume that the associativity constrain is
trivial (see \cite{ML} for details on what this exactly means).

Recall that a monoidal category is called rigid if any object $X$
has a left dual object $X^*$, the injection and evaluation
mappings $i_X: 1\!\!1\to X\otimes X^*$ and $e_X: X^*\otimes X\to
1\!\!1$, and if the triple $X^*, i_X, e_X$ is unique up to an
isomorphism. In a rigid monoidal category double dual is not
necessary isomorphic to the object itself, and, in particular,
each object has left and right duals.

A $G$-category $\cc$ is called {\it rigid} if it is a rigid
monoidal category and in addition to the properties listed above
one has
\[
\pi(X^*) = \pi({}^*X) = \pi(X)^{-1}
\]
where ${}^*X$ and $X^*$ are left and right duals to $X$
respectively. The evaluation and injection morphisms act
fiber-wise.

Now assume that the group $G$ is a braided group.  A category
$\cc$ is called braided rigid $G$-category if it is a rigid
$G$-category and in addition to this it has the following
properties:
\begin{enumerate}

\item There exists a functor $B: \cc \x \cc \rightarrow \cc \x
\cc$ such that the following diagram is commutative
\[
\begin{CD}
\cc \x \cc @>{B}>> \cc \x \cc \\
@V{\pi\x\pi}VV        @VV{\pi\x\pi}V \\
G \x G @>{\check \rc}>> G \x G
\end{CD}
\]
where ${\check \rc} = \rc\circ P$, and $P(x,y) = (y,x)$.  We will
write $B: (X,Y) \rightarrow (X_L(X,Y),X_R(X,Y))$ for the action of
$B$. This property is the lifting of the property 1. of braided
group $G$ to the category $\cc$.

\item The functor $B$ satisfies the following identities ( for
functors $\cc\times \cc\times \cc\to \cc\times \cc\times \cc$)
\[
B\circ(\otimes \times {id})=({id}\times
\otimes)\circ(B\times{id})\circ({id}
\times B)
\]
and the same identity for $B^{-1}$. This properties of $B$ are
liftings of properties 2. and 3. of the braided group $G$ to the
category $\cc$.

\item There exists an isomorphism of functors $c$ which makes the
following diagram commutative
\[
\begin{array}{ccc}
\cc \x \cc &\stackrel{B}{\rightarrow} &\cc \x \cc \\
\otimes \searrow &\stackrel{c}{\Rightarrow} &\swarrow \otimes \\
&\cc
\end{array}
\]
In other words, there exists a system of functorial isomorphisms
\[
c^{X,Y}: X \otimes Y \rightarrow X_L(X,Y) \otimes X_{R}(X,Y).
\]
\item The commutativity constraint should satisfy the hexagon axioms
\[
c^{X\otimes Y,Z} = (c^{X,X_L(Y,Z)} \otimes \id)(\id \otimes c^{Y,Z})
\]
\[
c^{X,Y \otimes Z} = (c^{X,Y} \otimes \id)(c^{X_{R}(X,Y),Z}
\otimes \id)
\]
\end{enumerate}

A braided $G$-category is called a {\it ribbon category} if it in
addition to being a $G$-category has a system of functorial
morphisms $\{\mu_X: X \rightarrow X^{**}\}_{X \in \Ob(\cc)}$ such that
\begin{itemize}
\item $\mu_{X \otimes Y} = \mu_X \otimes \mu_Y$
\item $\mu_{X^*} =(\mu_X^*)^{-1}$
\item $\mu_{1\!\!1}=id$
\end{itemize}
When it will not be misleading we will shorten the name `` rigid
braided ribbon $G$-category'' to ``ribbon $G$-category''.  The
theorem \ref{dg} provides an example of a $G$-category.

When $G_+=e$ are $G_-=G$ the notion of the $G$-category introduced
above is equivalent to the one introduced in \cite{T-2}.

\subsection{The category of $G$-colored diagrams is a ribbon $G$-category}

\begin{thm}\label{dg}
The category of $G$-colored framed diagrams is a ribbon $G^*$-category
where $G^*=G_+\times G_-$.
\end{thm}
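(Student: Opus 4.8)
The plan is to verify, one structure at a time, that the category $\mathcal D(G)$ defined in Section~\ref{ss4} carries all the data making it a rigid braided ribbon $G^*$-category, using the combinatorial structure of colored diagrams and the compatibility results already established. The fibration $\pi\colon \Ob(\mathcal D(G))\to G^*$ is defined on objects $\{(\e_1,x_1),\dots,(\e_n,x_n)\}$ by sending them to the product $\e_1(x_1)\star\cdots\star\e_n(x_n)$ computed in the group $G^*$; here $\e(x)$ is the element from~\eqref{invex} and $\star$ is the factorized multiplication. I would check that $\Hom$-sets between objects of different $\pi$-value are empty by arguing that the total holonomy around the full diagram (the product of boundary colors read off via~\eqref{g-x}) is a diagram invariant preserved by all framed Reidemeister moves; this is essentially the content already verified when showing $G$-colorings are compatible with the moves. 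The tensor product is horizontal juxtaposition of diagrams with concatenation of colored boundary sequences, and $\pi(X\otimes Y)=\pi(X)\pi(Y)$ follows because $\e(x)$ converts $G$-multiplication of holonomies into $\star$-multiplication in $G^*$; the empty diagram gives the unit $1\!\!1$ with $\pi(1\!\!1)=e$. Strictness of the associativity constraint is immediate since juxtaposition of diagrams is strictly associative up to the fixed planar conventions.

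Next I would treat rigidity. For an object $X=\{(\e_1,x_1),\dots,(\e_n,x_n)\}$ I define the dual $X^*$ by reversing the order of the sequence and flipping each sign $\e_i\mapsto -\e_i$, which geometrically corresponds to the cup/cap diagrams bending the strands back. The injection and evaluation morphisms $i_X\colon 1\!\!1\to X\otimes X^*$ and $e_X\colon X^*\otimes X\to 1\!\!1$ are the nested cup and cap diagrams, colored compatibly with the $G$-coloring rule of Definition~\ref{fact-rel}. I would verify the snake (zig-zag) identities by a direct isotopy of the colored diagrams, checking that the induced coloring on the straightened strand agrees with the original — this is guaranteed because the first two Reidemeister moves, under which colorings transform uniquely, suffice to straighten the zig-zag. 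The condition $\pi(X^*)=\pi({}^*X)=\pi(X)^{-1}$ then follows from the fact that reversing orientation at the boundary corresponds to applying $i$, the inverse in $G^*$, which sends $\pi(X)$ to $\pi(X)^{-1}$.

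The braiding is where the main content lies. I would define the functor $B\colon \mathcal D(G)\times\mathcal D(G)\to\mathcal D(G)\times\mathcal D(G)$ by sending a pair of one-strand objects $(X,Y)$ to $(X_L(X,Y),X_R(X,Y))$ using the maps $x_L,x_R$ from~\eqref{xLR}, extended to multi-strand objects by iterating through the strands, and the functorial isomorphism $c^{X,Y}\colon X\otimes Y\to X_L(X,Y)\otimes X_R(X,Y)$ is the positive crossing diagram with its unique compatible coloring. The commutativity of the square with $\check{\rc}=\rc\circ P$ is exactly the statement that the colors of the upper edges of a crossing are given by $(x_L,x_R)$ applied to the lower colors, which is how the coloring rule was set up. Properties~2 of $B$ (compatibility of $B$ with $\otimes$) and the hexagon axioms for $c$ are the diagrammatic translations of Reidemeister moves II and III: the hexagons encode the equality of colorings obtained by sliding a strand past a tensor product of two strands, and their validity is precisely the set-theoretical Yang--Baxter equation of Proposition~\ref{mathcalR} together with the compatibility of the coloring with the third Reidemeister move already noted in the text. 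The hard part will be bookkeeping the many-strand case: verifying that the iterated definition of $B$ on objects $\{(\e_i,x_i)\}$ with mixed orientations $\e_i=\pm$ is consistent, and that the hexagon identities hold in this full generality rather than just for single strands. I expect this to reduce, by naturality and induction on the number of strands, to the single-crossing Yang--Baxter relation, but the orientation signs require care. Finally, the ribbon structure $\mu_X\colon X\to X^{**}$ is supplied by the framing (the twist/curl diagram), and its three defining properties $\mu_{X\otimes Y}=\mu_X\otimes\mu_Y$, $\mu_{X^*}=(\mu_X^*)^{-1}$, and $\mu_{1\!\!1}=\id$ follow from the framed Reidemeister move of Fig.~\ref{fig-Rmove3} governing the curl, checked diagrammatically.
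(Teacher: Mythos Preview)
Your approach is essentially the one the paper takes: one exhibits the $G^*$-fibration $\pi$, the tensor product by juxtaposition, duals via nested cups and caps, and the braiding $c$ via the positive crossing, and then reduces the axioms to the compatibility of $G$-colorings with the framed Reidemeister moves together with Proposition~\ref{mathcalR}; the paper additionally begins by verifying directly that $(G^*,\rc)$ is a braided group, i.e.\ the identities $x_L(x,y)\star x_R(x,y)=x\star y$ and $x_L(x,y\star z)=x_L(x,y)\star x_L(x_R(x,y),z)$, $x_R(x,y\star z)=x_R(x_R(x,y),z)$, which you should also record since the hexagon and the square with $\check\rc$ ultimately rest on them rather than on Yang--Baxter alone. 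One slip to fix: the dual of $\{(\e_1,x_1),\dots,(\e_n,x_n)\}$ must be $\{(-\e_n,i(x_n)),\dots,(-\e_1,i(x_1))\}$, with $i$ applied to each $x_i$, not merely the sign-flipped reversal---under the paper's convention the boundary edge carries the color $\e_i(x_i)$, and only with the extra $i$ does the dual edge keep the same color so that the cup and cap are admissible $G$-colored diagrams and $\pi(X^*)=\pi(X)^{-1}$ in $G^*$.
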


\medskip
\noindent
\begin{proof} First, let us check that $\rc$ satisfies the required properties.
The first identity for $\rc$ is equivalent to
\[
x_L(x,y)\cdot x_R(x,y)=x\cdot y
\]
The second identity for $\rc$ is equivalent to
\[
x_L(x, y\cdot z)=x_L(x,y)\cdot x_L(x_R(x,y),z) \,
\]
and
\[
x_R(x, y\cdot z)=x_R(x_R(x,y),z)
\]
and similar identities assure the last property of $\rc$. Here the
multiplication is taken in $G^*$. All these identities are easy to check.

Now let us describe the structure of a $G^*$-category explicitly.
Define the $G^*$ structure on $\dc(G)$ as
\[
\pi((\e_1,x_1),\dots,(\e_n,x_n))=\e_1(x_1)\cdots\e_n(x_n)
\]
Here the product is taken in $G^*$, $x\cdot
y=x_+y_+y_-^{-1}x_-^{-1}$, and $\e(x)$ is defined in
(\ref{invex}).

The monoidal structure is the same as for the category of
diagrams. The tensor product of objects is
\[
\{(\e_1,x_1),\dots,(\e_n,x_n)\}\otimes\{(\s_1,y_1),\dots,(\s_m,y_m)\}=
\{(\e_1,x_1),\dots,(\e_n,x_n),(\s_1,y_1),\dots,(\s_m,y_m)\}
\]
The tensor product of morphisms is shown on Fig.~\ref{t-prod}. The
identity object is the empty set.

\begin{figure}[htbp]
\begin{center}
{\scalebox{0.4}{\includegraphics{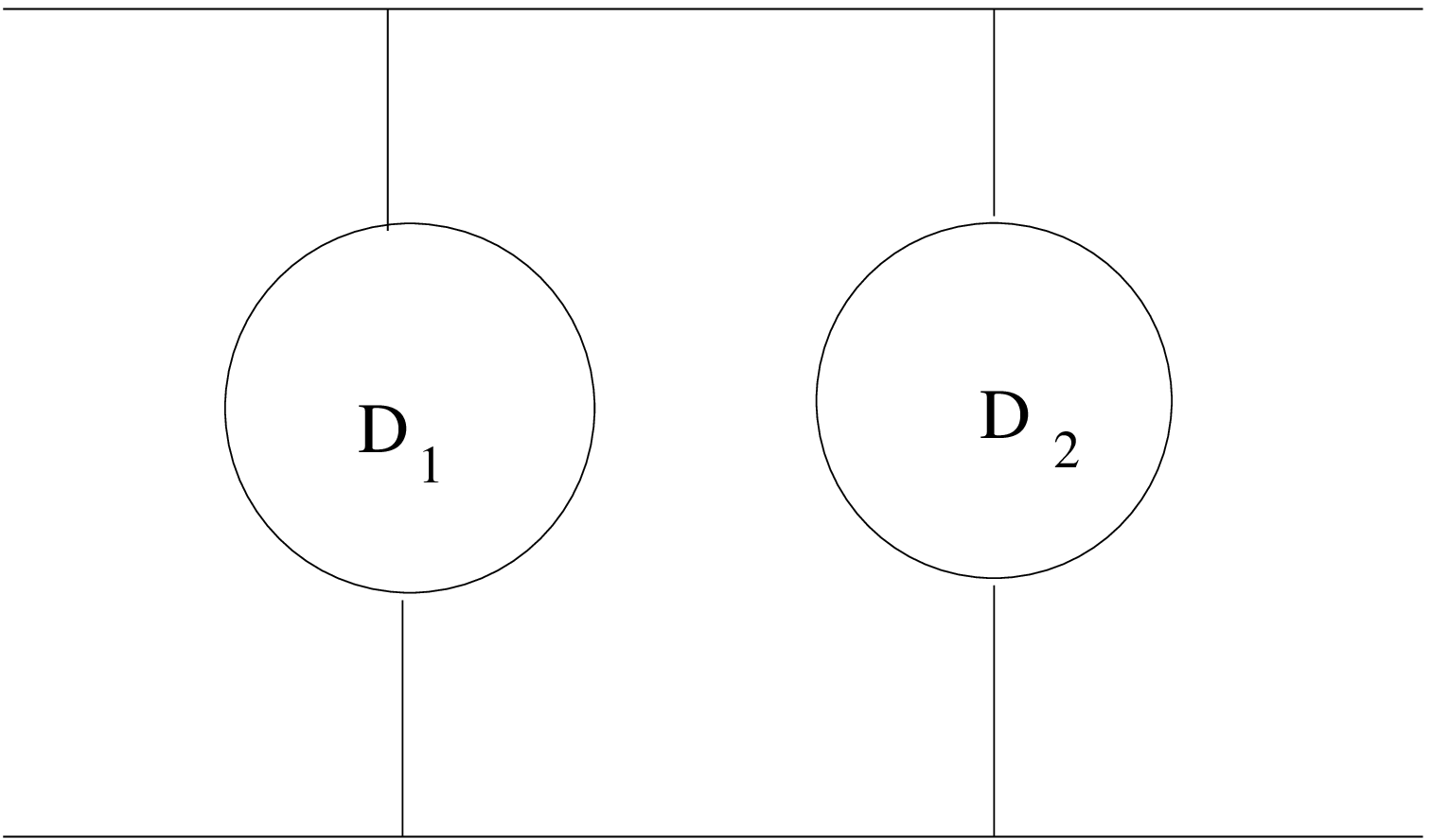}}}
\end{center}
\caption{}
\label{t-prod}
\end{figure}

The object dual to $\{(\e_1,x_1),\dots,(\e_n,x_n)\}$ is
$\{(-\e_n,i(x_n)),\dots,(-\e_1,i(x_1))\}$ with the evaluation and
the injection morphisms given by diagrams from Fig.~\ref{fig-12} and
Fig.~\ref{fig-13} with the $G$-colorings induced by
objects.

\begin{figure}[htbp]
\begin{center}
{\scalebox{0.4}{\includegraphics{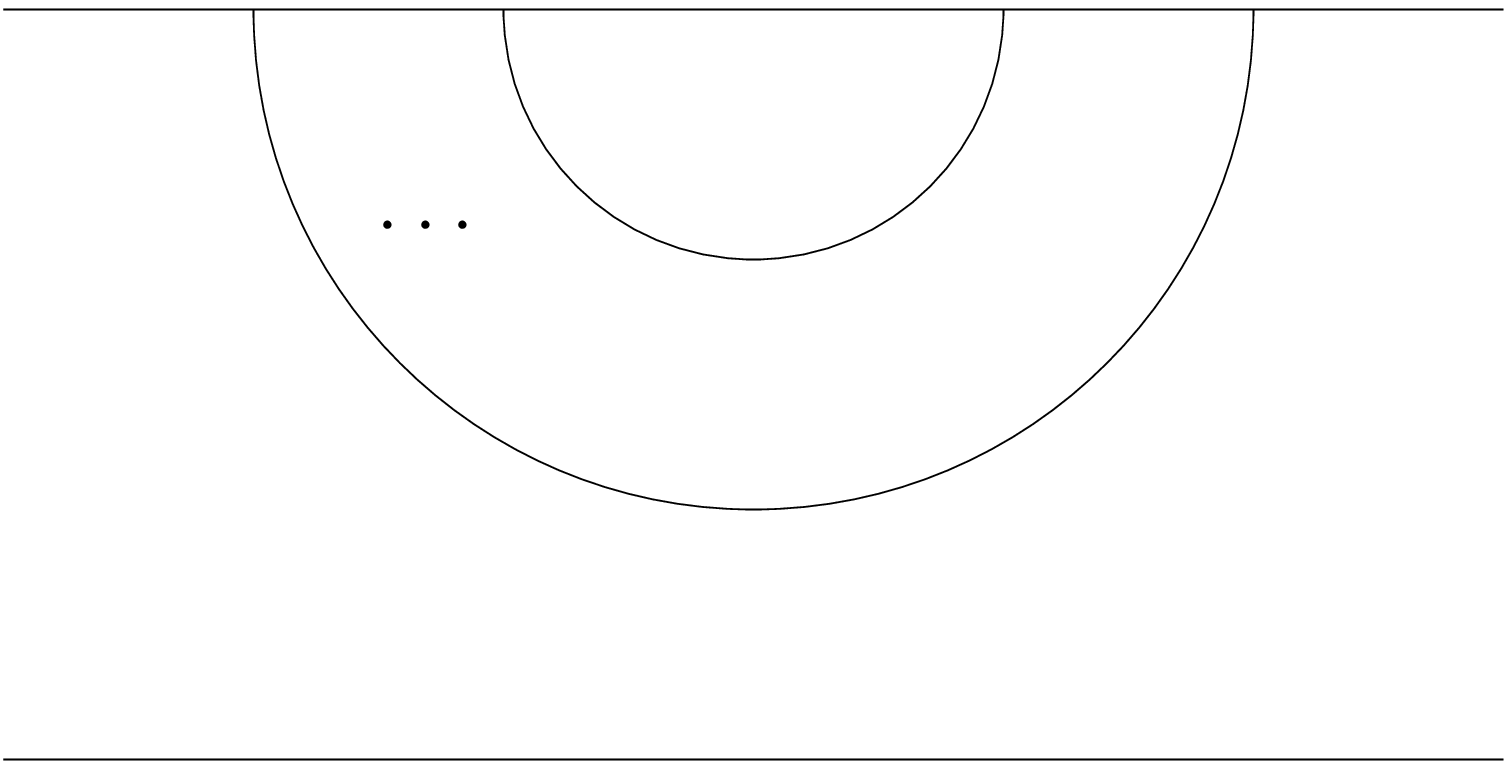}}}
\end{center}
\caption{}
\label{fig-12}
\end{figure}

\begin{figure}[htbp]
\begin{center}
{\scalebox{0.4}{\includegraphics{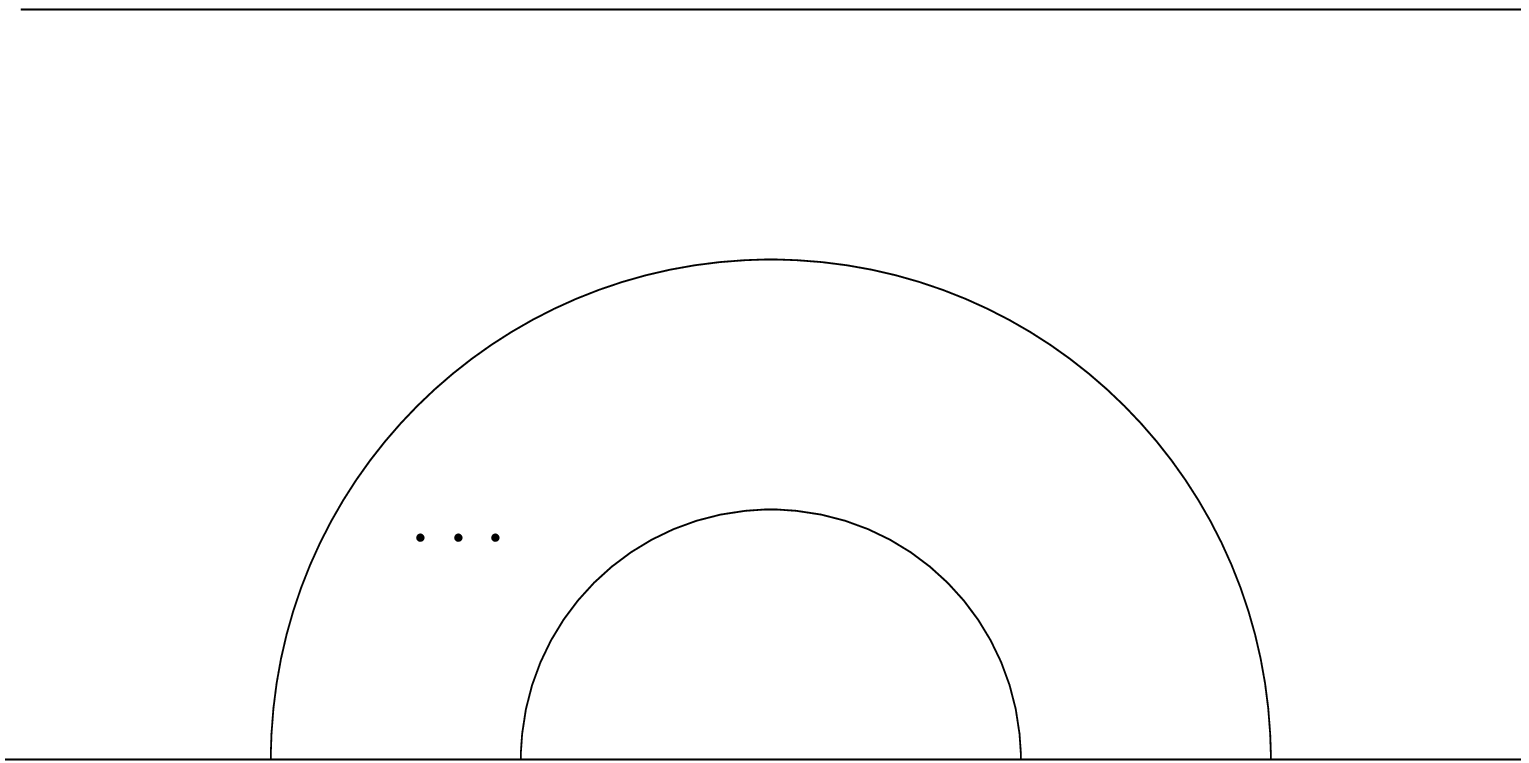}}}
\end{center}
\caption{}
\label{fig-13}
\end{figure}

To describe the braiding, let us first define the functor $B:
\dc(G)\times \dc(G) \to \dc(G)\times \dc (G)$ as follows.
On objects it acts as
\begin{multline*}
B(\{(\e_1,x_1),\dots,(\e_n,x_n)\},\{(\s_1,y_1),\dots,(\s_m,y_m)\})\\
=
(\{(\s_1,y_1^L),\dots,(\s_m,y_m^L)\},\{(\e_1,x_1^{R}),\dots,(\e_n,x_n^{R})\})
\end{multline*}
where $(x_1^L,\dots,x_m^L,y_1^{R},\dots,y_n^{R})$ is the image of
$(x_1,\dots,x_n,y_1,\dots,y_m)$ with respect to the map
\[
(s_n\cdots s_{n+m-1})(s_{n-1}\cdots s_{n+m-2})
\cdots(s_2\cdots s_{n+1})(s_1\cdots s_n): G^{\x(n+m)}
\to G^{\x(n+m)}
\]
where $s_i = {\check \rc}_{ii+1}$.

If $[(D_i,c_i)]$ is a morphism $(\e^{(i)},x^{(i)})\to (\sigma^{(i)},y^{(i)})$,
\[
B((D_1,c_1),(D_2,c_2))=((D_2,c_2'),(D_1,c_1')),
\]
Here colorings $c_1'$ and $c_2'$ are determined by
$c_1$ and $c_2$ and by the corresponding objects.

The commutativity morphism is represented by the diagram on Fig.
~\ref{fig-14} and it is a mapping
\begin{multline*}
\{(\e_1,x_1),\dots,
(\e_n,x_n)\}\otimes\{(\sigma_1,y_1),\dots,(\sigma_m,y_m)\}\\
\to \{(\s_1,y_1^L),\dots,(\s_m,y_m^L)\}\otimes\{(\e_1,x_1^{R}),
\dots,(\e_n,x_n^{R})\}
\end{multline*}
The coloring of the diagram on Fig.~\ref{fig-14} is determined
by the objects.

\begin{figure}[htbp]
\begin{center}
{\scalebox{0.4}{\includegraphics{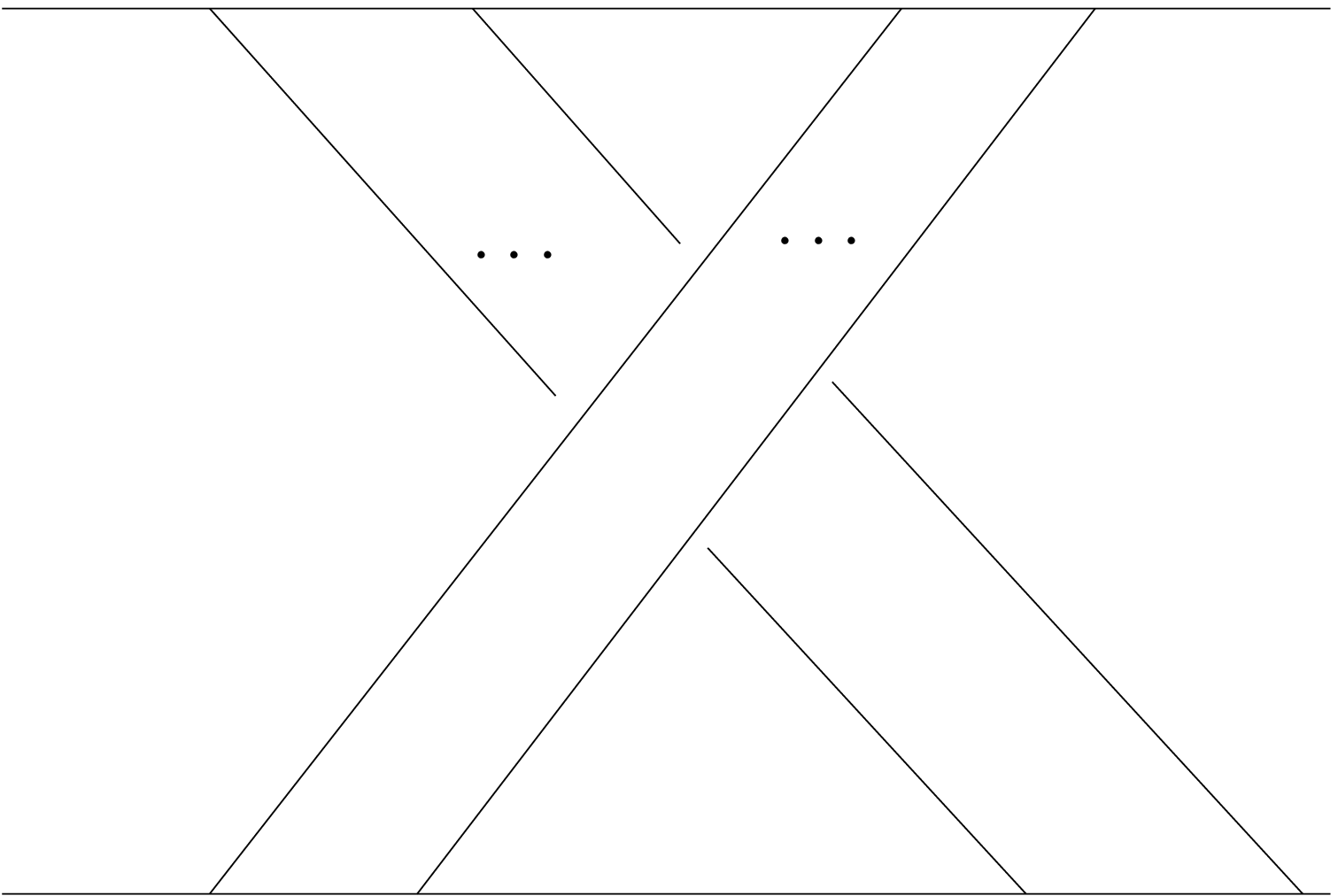}}}
\end{center}
\caption{}
\label{fig-14}
\end{figure}

\end{proof}
\subsection{Elementary diagrams}
The following fact is a key for construction of invariants of
tangles via braided monoidal categories.

\begin{prop}All morphisms in the category ${\mathcal D}(G)$
are compositions of tensor products of elementary diagrams.
Elementary diagrams are given on Fig.~\ref{elem-diagr}.
\end{prop}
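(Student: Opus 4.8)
The plan is to use a Morse-theoretic slicing argument adapted to $G$-colored diagrams. Given a morphism, I represent it by a $G$-colored diagram $(D,c)$ drawn in $\R\times I$. Since morphisms are (framed) Reidemeister classes, I am free to replace $(D,c)$ by any equivalent representative, so I first isotope $D$ into general position with respect to the height function $h$ given by the $I$-coordinate $z$ (the direction along which tangles are composed). In general position the restriction of $h$ to $D$ is a Morse function whose critical points are isolated and occur at pairwise distinct heights, and whose critical values are exactly the local maxima, the local minima, and the double points of $D$. The coloring $c$ is carried along by the isotopy, and by the compatibility of $G$-colorings with the framed Reidemeister moves established above, the Reidemeister class of $(D,c)$ is unchanged.

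Next I would slice. I choose regular values $h_0<h_1<\dots<h_N$ of $h$, one strictly between each consecutive pair of critical values and one below the lowest and one above the highest, so that each horizontal line $z=h_j$ meets $D$ transversally in finitely many points. Recording, for each such point, the sign of the orientation and the color of the edge crossing the line produces an object $O_j$ of ${\mathcal D}(G)$, with $O_0$ and $O_N$ the source and target of the morphism. The portion $D_j$ of $D$ lying in the slab $h_{j-1}\le z\le h_j$ contains exactly one critical point and otherwise consists of noncritical vertical strands, so $D_j$ is the horizontal juxtaposition of a single elementary diagram from Fig.~\ref{elem-diagr} --- a cup, a cap, a positive crossing, or a negative crossing --- with identity strands on either side. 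In the monoidal structure of Theorem~\ref{dg} this juxtaposition is exactly a tensor product of an elementary diagram with identity morphisms, and $(D,c)$ is the composition $D_N\circ\dots\circ D_1$ of the slices.

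It then remains to verify that the $G$-coloring of each slice is the one induced by its boundary objects, so that the decomposition takes place in ${\mathcal D}(G)$ and not merely among uncolored diagrams. Here the earlier results do the work: the color of every edge of $D$ is determined from the incoming boundary colors by the crossing relations of Definition~\ref{fact-rel} together with the cup and cap prescriptions built from $\e(x)$ and $i(x)$, and this propagation is precisely the coloring that each elementary diagram receives from the objects $O_{j-1}$ and $O_j$. Consequently the colorings of $(D,c)$ and of the composite $D_N\circ\dots\circ D_1$ agree edge by edge.

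The classical part --- existence of a general-position (Morse) representative and the decomposition of the underlying diagram into elementary tangles --- is routine. The main point to check, and the only genuine obstacle, is that slicing is compatible with the coloring: cutting a colored edge at a regular level and regluing must not change the class, so each slice must inherit a well-defined $G$-coloring from $O_{j-1}$ and $O_j$ that reassembles to $c$. Given the uniqueness of color propagation through crossings and the boundary conventions $\e(x)$ and $i(x)$, this compatibility is immediate, so the difficulty is bookkeeping rather than substance.
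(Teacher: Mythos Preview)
Your proposal is correct and is precisely the standard Morse-theoretic slicing argument for framed tangles that the paper defers to by citing \cite{T-1}, with the only extra ingredient being the (straightforward) verification that the $G$-coloring is carried along consistently through the slices. The paper gives no details beyond that citation, so your write-up is simply a spelled-out version of the same approach.
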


\begin{figure}[htbp]
\begin{center}
{\scalebox{0.4}{\includegraphics{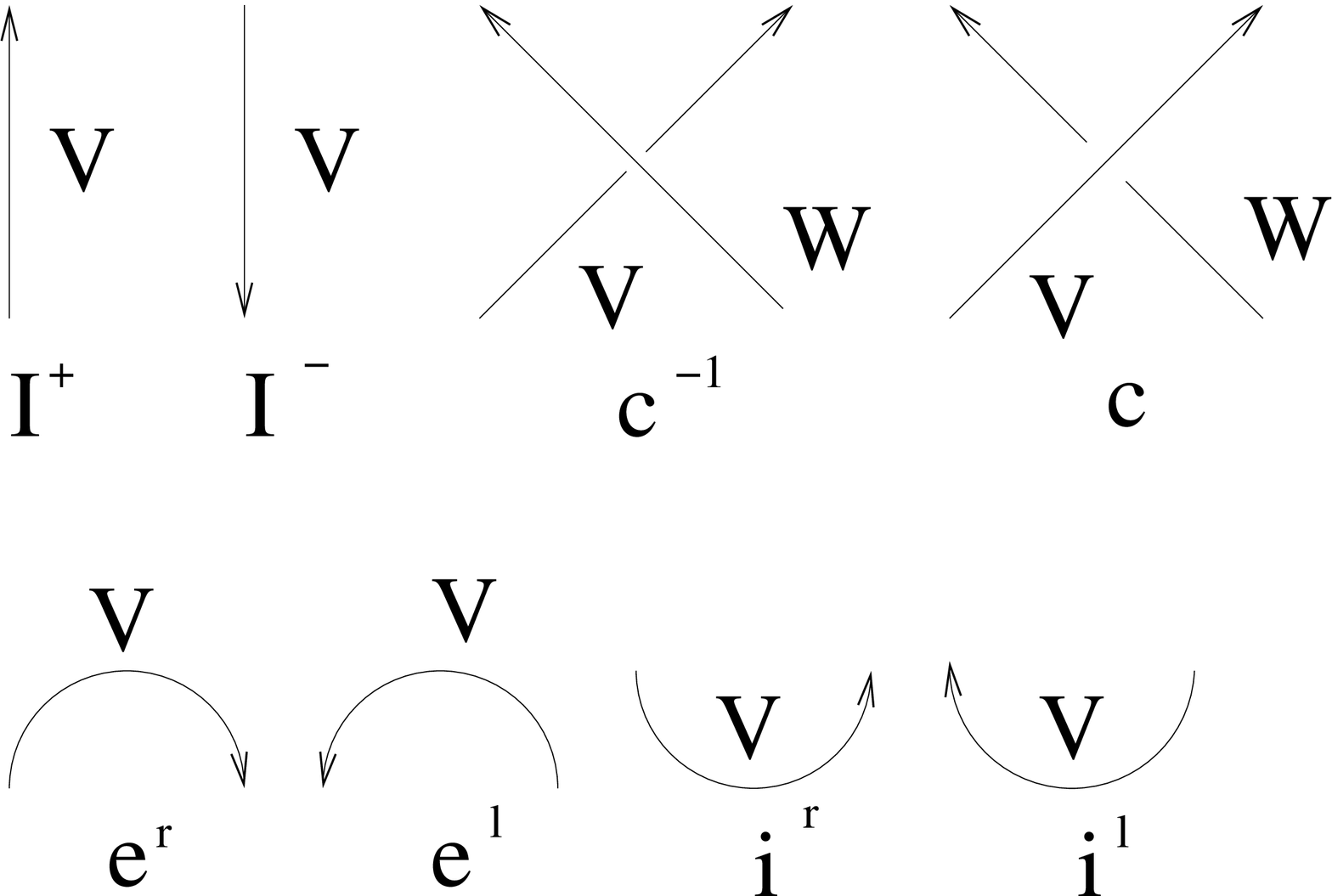}}}
\end{center}
\caption{}
\label{elem-diagr}
\end{figure}

The proof of this proposition and the definition of elementary
diagrams are the same as for the category of framed tangles (see
\cite{T-1}).

\subsection{The category of $\cc$-diagrams}

Let $\cc$ be a braided $G$-category.
\medskip
\noindent
\begin{defin}  A pair $(D,a)$ where $D$ is a diagram and $a\colon
E(D) \rightarrow \Ob(\cc)$ is called a $\cc$-diagram if at each
double point the values of the map $a$ on adjacent edges satisfy
the following conditions (the edges are enumerated as in Fig.~\ref{fig-enum}):
\begin{itemize}
\item If a double point is positive, $X_a=X_L(X_c,X_d), \ X_b=X_R(X_c,X_d)$.
\item If the double point is negative, $X_c=X_L(X_a,X_b), \
X_d=X_R(X_a, X_b)$.
\end{itemize}
\end{defin}
If $G$ is factorizable and $\rc$ is given by (\ref{xLR}), then it is clear
that each $\cc$-diagram defines a $G$-colored diagram defined by the
composition
map $\pi \circ a: E(D) \rightarrow G$.

Now let us define the category $\dc(\cc)$ of $\cc$-diagrams.

{\em Objects} of $\dc(\cc)$ are finite sequences
$\{(\e_1,X_1),\dots,(\e_n,X_n)\}$, where $\e_i = \pm$ and $X_i \in \Ob(\cc)$.

{\em Morphisms} from $\{(\e_1,X_1),\dots,(\e_n,X_n)\}$ to
$\{(\s_1,Y_1),\dots,(\s_m,Y_m)\}$ are framed $\cc$-diagrams $(D,a)$ with
$a(e_i^+) = Y_i^{\e_i}$, $a(e_i^-) = X_i^{\s_i}$.  Here $e_i^+$ are edges of $D$
adjacent to the upper boundary, enumerated from left to right
$i=1,\dots,m$, and $e_i^-$ are edges adjacent to the lower boundary, with
$i=1,\dots,n$.

The identity morphism of $(\e_1,X_1),\dots,(\e_n,X_n)$ is the trivial
braid with the orientation of components defined by $\e_i$ and with $X_i
\in \Ob(\cc)$ assigned to $i$-th strand.

The following theorem is a generalization of Theorem \ref{dg} about
$G$-colored diagrams.

\begin{thm}
The category $\dc(\cc)$ is a ribbon $G$-category.
\end{thm}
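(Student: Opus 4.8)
The plan is to prove that $\dc(\cc)$ is a ribbon $G$-category by promoting the entire structure established in Theorem \ref{dg} for $\dc(G)$ to the categorical level, replacing group-valued colors $x_e \in G$ by object-valued colors $X_e \in \Ob(\cc)$ and replacing the set-theoretic maps $x_L, x_R$ by the functor $B$ of the braided $G$-category $\cc$. The key observation making this work is the remark immediately preceding the theorem: the composition $\pi \circ a$ sends any $\cc$-diagram to an honest $G$-colored diagram, so the fibration $\pi: \Ob(\dc(\cc)) \to G$ can be defined exactly as in Theorem \ref{dg} via $\pi(\{(\e_i, X_i)\}) = \prod_i \e_i(\pi(X_i))$, and all the compatibilities with the braided-group structure of $G^*$ are already guaranteed once we know the lifted axioms for $B$ and $c$ hold in $\cc$.

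First I would verify that $\dc(\cc)$ is a (strict) monoidal category fibered over $G$: the tensor product of objects is concatenation of sequences and the tensor product of morphisms is juxtaposition of diagrams (as in Fig.~\ref{t-prod}), with the empty sequence as identity object; one checks $\pi(X \otimes Y) = \pi(X)\pi(Y)$ and $\pi(1\!\!1) = e$ directly from the definition of $\pi$. Next I would establish rigidity: the dual of $\{(\e_1,X_1),\dots,(\e_n,X_n)\}$ is $\{(-\e_n, X_n^*),\dots,(-\e_1,X_1^*)\}$ with evaluation and injection morphisms represented by the cap/cup diagrams of Figs.~\ref{fig-12}--\ref{fig-13}, now colored by the objects $X_i$ and their duals in $\cc$; the condition $\pi(X^*) = \pi(X)^{-1}$ follows because $\pi(X_i^*) = \pi(X_i)^{-1}$ holds in $\cc$ by hypothesis and $i(x)$ is the inverse in $G^*$. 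Then I would define the braiding functor $B$ on $\dc(\cc)$ by the same permutation word in $s_i = \check\rc_{i,i+1}$ as in Theorem \ref{dg}, except each elementary crossing now acts through the functor $B$ of $\cc$ rather than through the map $\check\rc$, and the commutativity isomorphism $c^{X,Y}$ on diagrams (Fig.~\ref{fig-14}) is assembled from the functorial isomorphisms $c^{X,Y}$ supplied by axiom (3) of the braided $G$-category. The hexagon axioms for the diagram-level braiding reduce, edge by edge, to the hexagon axioms (4) already assumed for $c$ in $\cc$, and the Yang--Baxter / third-Reidemeister compatibility reduces to axiom (2) for $B$, exactly as Proposition \ref{mathcalR} was used in the group case.

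Finally I would supply the ribbon twist $\mu_X$ on $\dc(\cc)$ by taking $\mu$ componentwise, $\mu_{\{(\e_i,X_i)\}} = \bigotimes_i \mu_{X_i}$, inheriting the three ribbon conditions ($\mu_{X\otimes Y}=\mu_X\otimes\mu_Y$, $\mu_{X^*}=(\mu_X^*)^{-1}$, $\mu_{1\!\!1}=\id$) directly from those assumed for $\cc$. The main obstacle I anticipate is not any single axiom check — each of these is a mechanical transcription of the corresponding verification in Theorem \ref{dg} — but rather the \emph{well-definedness of morphisms on framed Reidemeister classes}: one must confirm that the functorial isomorphisms $c$, the duality morphisms, and the twist $\mu$ are genuinely invariant under the framed Reidemeister moves when colored by objects of $\cc$, i.e.\ that the naturality and coherence data of $\cc$ are precisely strong enough to make the diagram-level structure descend to isotopy classes. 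This is where the functoriality of $B$ (as opposed to $\check\rc$ being a mere map) and the coherence of $c$ are essential, and I would organize the proof so that each Reidemeister move is matched against exactly one of the braided $G$-category axioms, thereby reducing the whole theorem to the bookkeeping already validated in the factorizable-group special case.
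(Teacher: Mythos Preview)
Your proposal is correct and follows exactly the approach the paper takes: the paper's entire proof is the single sentence ``The proof is parallel to the one of the theorem \ref{dg},'' and what you have written is precisely a fleshed-out version of that parallel, lifting each structural ingredient (fibration, monoidal structure, duals, braiding functor $B$, commutativity isomorphism, ribbon element) from the group-valued setting to the $\cc$-valued setting. Your identification of the Reidemeister well-definedness as the one place requiring genuine care is apt, and matching each move to a braided $G$-category axiom is the right organizing principle; the paper simply leaves all of this implicit.
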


The proof is parallel to the one of the theorem \ref{dg}.

\section{Invariants of framed $G$-tangles}\label{sec4}
\subsection{The functor $\Phi\colon\dc(\cc)\to \cc$} As in the case of framed
tangles and tangled framed graphs the first step in
the construction of invariants of
$G$-tangles will be the construction of a rigid $G$-braided
monoidal functor from the category of $\cc$-diagrams to the
category $\cc$.

Let $\cc$ be a ribbon $G$-category.
\begin{thm}
There exists a unique covariant functor $\Phi: \ {\mathcal D}
(\cc)\to \cc$ such that
\begin{itemize}
\item $\Phi(\{(\e_1,X_1), \dots (\e_n,X_n)\})=
X_1^{\e_1} \otimes \dots \otimes X_n^{\e_n}$,
where $X^+=X, \ X^-=X^*$.
\item $\Phi$ is a monoidal functor, i.e.
\[
\Phi((D_1,a_1)\otimes (D_2,a_2))=\Phi((D_1,a_1))\otimes \Phi((D_2,a_2))
\]
where $(D_i,a_i)$ are $\cc$-diagrams.
\item Values of $\Phi$ on elementary diagrams are:
\begin{enumerate}
\item If $I^\e: \  (\e,X)\to (\e,X)$ is the identity morphism
then
\[
\Phi(I^\e)=id_{X^\epsilon}.
\]

\item For morphism  $ e^{r,l}_X:  (\pm,X)\otimes (\mp,X)\to 1$
we have:
\[
\Phi(e^r_X)=e_{X^*}\circ(\mu_X\otimes id): X\otimes X^*\to
X^{**}\otimes X^*\to 1\!\!1  \ ,
\]
\[
\Phi(e^l_X)=e_X: X^*\otimes X\to 1\!\!1 \,
\]

\item For $i^{r,l}: \  1\!\!1\to(\mp,X)\otimes (\pm,X)$ we have
\[
\Phi(i^r_X)= (\mu_X^{-1}\otimes id)\circ i_{X^*}: \ 1\!\!1 \to
X\otimes X^* \,
\]
\[
\Phi(i^l_X)= i_X: \ 1\!\!1\to X^*\otimes X,
\]

\item If $c: \  (+,X)\otimes (+,Y)\to (+,X_L(X,Y))
\otimes (+,X_R(X,Y))$ is the
braiding morphism,
\[
\Phi(c)=c^{X,Y}: \   \ X\otimes Y \to X_L
\otimes X_R \ .
\]
\item For the inverse morphism $c^{-1}: \  (+,X_L)\otimes (+,X_R)\to (+,X)
\otimes (+,Y)$ we have:
\[
\Phi(c^{-1})= (c^{X,Y})^{-1}: \  X_L\otimes
X_R \to X\otimes Y \ .
\]
\end{enumerate}

\item This functor $\Phi$ is rigid monoidal and $G$-braided.
\end{itemize}
\end{thm}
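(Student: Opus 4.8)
The plan is to construct $\Phi$ by first defining it on objects and elementary diagrams exactly as prescribed in the statement, and then extending it to arbitrary morphisms using the fact, established in the preceding proposition, that every morphism in $\dc(\cc)$ is a composition of tensor products of elementary diagrams. The uniqueness part is then immediate: since the values on elementary diagrams are dictated and $\Phi$ is required to respect both $\otimes$ and composition, the value on any morphism is forced. So the entire content lies in showing that this extension is \emph{well defined}, i.e. independent of the way a given $\cc$-diagram is decomposed into elementary pieces.

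First I would set up the bookkeeping: a $\cc$-diagram gives, via the defining coloring conditions at double points and the object-compatibility at the boundary, a labeling that propagates through any elementary decomposition. I would check that the target morphisms assigned to the five elementary types ($I^\e$, $e^{r,l}_X$, $i^{r,l}_X$, $c$, $c^{-1}$) have source and target objects in $\cc$ matching the images under $\Phi$ of the $\cc$-diagram objects they connect; in particular the braiding values use the functorial isomorphism $c^{X,Y}\colon X\otimes Y\to X_L(X,Y)\otimes X_R(X,Y)$ supplied by the rigid braided $G$-category structure, and the duality values use the specified $i_X$, $e_X$ together with the ribbon morphisms $\mu_X$. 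This verifies that $\Phi$ is at least defined on each generator consistently with the $G$-grading $\pi$.

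The main step is well-definedness under relations. Two decompositions of the same $\cc$-diagram differ by a finite sequence of local moves, which fall into two classes: planar isotopy moves (far-commutativity of tensor products, the zig-zag or "snake" identities relating $i$ and $e$, and sliding of crossings past each other) and the framed Reidemeister moves (the second Reidemeister move, and the third). I would handle these move by move. The planar isotopy and the snake relations reduce to the standard rigid-category identities for $i_X,e_X$ together with the naturality and the $\mu$-compatibilities $\mu_{X\otimes Y}=\mu_X\otimes\mu_Y$, $\mu_{X^*}=(\mu_X^*)^{-1}$, $\mu_{1\!\!1}=\id$; these are exactly the ribbon axioms, so the snakes close. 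The second Reidemeister move corresponds to $\Phi(c)\Phi(c^{-1})=\id$ and $\Phi(c^{-1})\Phi(c)=\id$, which holds because $c^{X,Y}$ is an isomorphism. The third Reidemeister move is the crucial one: its invariance is precisely the hexagon-type axioms for $c$ in the braided $G$-category, namely the two identities $c^{X\otimes Y,Z}=(c^{X,X_L(Y,Z)}\otimes\id)(\id\otimes c^{Y,Z})$ and $c^{X,Y\otimes Z}=(c^{X,Y}\otimes\id)(c^{X_R(X,Y),Z}\otimes\id)$, combined with functor property~2 of $B$ that tracks how the $G$-labels $X_L,X_R$ permute.

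The hard part will be the third Reidemeister move, because here one must check not only that the two composite morphisms in $\cc$ agree, but that the induced $\cc$-labelings of the intermediate edges produced by the two sides coincide before the hexagon axioms can even be applied; this is where functor property~2 of the braiding $B$ (the lifting of properties~2 and~3 of the braided group) does the essential work, guaranteeing that the relabeling of strands is consistent so that the two sides of the hexagon are comparing the same objects. Once the intermediate labels are matched, the equality of morphisms is the hexagon axiom. Having verified invariance under all framed Reidemeister and isotopy moves, $\Phi$ descends to Reidemeister classes and is thus well defined on morphisms of $\dc(\cc)$; functoriality ($\Phi(fg)=\Phi(f)\Phi(g)$) and the monoidal property hold by construction, and the rigid and $G$-braided compatibilities follow from the duality and braiding clauses checked above. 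I would remark that this argument is the direct $G$-graded analogue of the Reshetikhin--Turaev construction, so wherever a verification is identical to the ungraded case I would cite \cite{RT} and \cite{T-1} rather than repeat it, and concentrate the written proof on the points where the $G$-grading and the maps $X_L,X_R$ enter.
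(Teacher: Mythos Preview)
Your proposal is correct and takes essentially the same approach as the paper: the paper's entire proof is the single sentence ``The proof of this theorem is completely parallel to the corresponding theorem describing invariants of framed tangles,'' i.e.\ a bare reference to the standard Reshetikhin--Turaev argument in \cite{RT,T-1}, which is precisely the route you outline. Your sketch is in fact considerably more detailed than what the paper supplies; the one small omission is that among the framed Reidemeister moves you should also mention the modified first move (Fig.~\ref{fig-Rmove3}), whose verification is where the ribbon morphism $\mu_X$ is genuinely used, but this is again covered by the citation to the classical case.
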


The proof of this theorem is completely parallel to the
corresponding theorem describing invariants of framed tangles.

\subsection{Invariants of framed $G$-tangles}
Let $\cc$ be a ribbon $G$-category and $A=\{A_x\}_{x\in G}$ be a
family of objects such that $\pi(A_x)=x$. In other words $A$ is a
section of  $\pi: \cc\to G$. Assume that objects from this family have the
following property:
\[
B: (A_x, A_y) \to (A_{x_L(x,y)},  A_{x_R(x,y)})
\]
This implies that the braiding morphisms act as:
\[
c^{A_x,A_y}: A_x\otimes A_y\to A_{x_L(x,y)}\otimes  A_{x_R(x,y)}
\]
It also implies that
\[
c^{A^*_x, A_y}: A^*_x\otimes A_y\to A_{x_R(y,x_1)}\otimes A^*_{x_1}
\]
where $x_1\in G$ is such that $x=x_L(y,x_1)$. This can be derived
from the previous formula and from the axioms for the evaluation
and injection morphisms. The action of the braiding on other duals
can be computed similarly.

Now, let us fix such family of objects in $\cc$ to define a
$\cc$-coloring of $D$ for a given $G$-coloring $c$ of the diagram
$D$.
We construct invariants of $G$-tangles as follows:
\begin{itemize}
\item given a $G$-tangle $(t,\rho)$ define the $G$-colored diagram $[(D,c)]$
as in section 2 using the equivalence of categories of $G$-tangles and
$G$-diagrams.
\item given a $G$-colored diagram $[(D,c)]$ define a $\cc$-diagram $[(D,a)]$
as above.
\item apply the functor $\Phi$ to the $\cc$-diagram $[(D,a)]$.
\end{itemize}
It is clear, from the definition of every step here, that the
composition map is an invariant of $(t,\rho)$ with values in
morphisms of the category $\cc$.

In the following section we will describe a $GL_2(\C)$-category associated
with  the quantized universal enveloping algebra of $gl_2(\C)$.

\section{Quantized universal enveloping algebra of $gl_2$}\label{sec5}
\subsection{The algebra $\V$}
The algebra $\V$ over the ring $\C[[h]]$ is generated
by elements $H,G,X$, and $Y$ with defining relations
\[
[H,G]=0, [H,X]=2X,  [H,Y]=-2Y,
\]
\[
[G,X ]=2X, [G,Y]=-2Y,
\]
\[
 [X,Y]=\frac{e^{\frac{hH}{2}}-e^{-\frac{hG}{2}}}{e^{\frac{h}{2}}-e^{-\frac
 {h}{2}}}
\]

The Hopf algebra structure on $\V$ is defined by the action of the
comultiplication on generators:
\begin{eqnarray}
\D H&=&H\otimes 1+1\otimes H, \ \D G=G\otimes 1+ 1\otimes G, \\
\D X&=&X \otimes e^{\frac{hH}{2}}  + 1\otimes X, \ \D Y= Y
\otimes 1 +e^{-\frac{hG}{2}}\otimes Y
\end{eqnarray}

Elements $H, G, X$ and $Y$ "correspond to" elements $2e_{11}, \
2e_{22}, \ e_{12}$ and $e_{21}$ respectively.

The algebra $\V$ is the Drinfeld double of the quantized universal
enveloping algebra $U_h(b)\subset U_h(sl_2)$ where $b$ is a Borel
subalgebra in $sl_2$. As the double of a Hopf algebra it is
quasitriangular with the universal $R$-matrix
\[
R=\exp\left( \frac{h}{4} H\otimes G\right) \prod_{n\geq 0}
(1+e^{\frac{h}{2}}(e^{\frac{h}{2}}-e^{-\frac {h}{2}})^2X\otimes Y
e^{-nh})
\]
This is the element of $\V^{\otimes 2}$ which one should consider
as a formal power series in $h$.

Since $R$ is the universal $R$-matrix it satisfies the following
identities:
\begin{equation}\label{br}
R \D(a)R^{-1}= \sigma\cdot\D(a)
\end{equation}
\[
(\D \oti id)(R)= R_{13} R_{23}
\]
\[
(id \oti \D)(R) =R_{13} R_{12}
\]
where $\sigma$ is the permutation operator $\sigma(a\otimes
b)=b\otimes a$. In particular, $R$ satisfies the Yang-Baxter equation
\[
R_{12}R_{13}R_{23}=R_{23}R_{13} R_{12}
\]

\subsection{The inner automorphism $\rc$}
Define the inner automorphism \\ $\rc: \V^{\oti 2}[[h]]\to\V^{\oti
2}[[h]]$ as
\begin{equation}\label{rc}
\rc(x\oti y)=R(x\oti y) R^{-1}
\end{equation}

It is easy to compute the action of $\rc$ on generators.

\begin{thm} The following identities hold.
\[
\rc(1\oti e^{\frac{hH}{2}})=(1\oti
e^{\frac{hH}{2}})(1+e^{\frac{h}{2}}(e^{\frac{h}{2}}-e^{-\frac
{h}{2}})^2e^{-\frac{hH}{2}}X\otimes Ye^{\frac{hG}{2}})^{-1}
\]
\[
\rc(1\oti e^{\frac{hG}{2}})=(1\oti
e^{\frac{hG}{2}})(1+e^{\frac{h}{2}}(e^{\frac{h}{2}}-e^{-\frac
{h}{2}})^2e^{-\frac{hH}{2}}X\otimes Ye^{\frac{hG}{2}})^{-1}
\]
\[
\rc(X\oti 1)=X\oti e^{\frac{hG}{2}}
\]
\[
\rc(1\oti Y)=e^{-\frac{hH}{2}}\oti Y
\]
\end{thm}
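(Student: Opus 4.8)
The plan is to verify each of the four stated formulas by direct computation from the definition $\rc(x\oti y)=R(x\oti y)R^{-1}$, using the explicit form of the universal $R$-matrix together with the defining commutation relations of $\V$. Since $\rc$ is an inner automorphism by $R$, I will exploit the fact that conjugation by a product factors as a composition of conjugations; writing $R=R_0\cdot R_1$ with $R_0=\exp(\tfrac{h}{4}H\oti G)$ the "Cartan" part and $R_1=\prod_{n\geq 0}(1+e^{h/2}(e^{h/2}-e^{-h/2})^2 X\oti Y e^{-nh})$ the "nilpotent" part, each conjugation can be handled separately.

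First I would treat the two generators $X\oti 1$ and $1\oti Y$, which are the simplest because they are easy to push through both factors of $R$. For $\rc(X\oti 1)$, conjugation by $R_0=\exp(\tfrac h4 H\oti G)$ acts on $X\oti 1$ by the adjoint action of $\tfrac h4 H\oti G$; since $[H,X]=2X$ this produces a factor involving $e^{\frac{hG}{2}}$ in the second slot, and one checks that conjugation by the nilpotent factor $R_1$ leaves $X\oti 1$ unchanged because $X$ commutes with $X$ (the relevant commutators vanish). This should yield $\rc(X\oti 1)=X\oti e^{\frac{hG}{2}}$ directly. The computation of $\rc(1\oti Y)$ is entirely analogous, using $[H,Y]=-2Y$ acting through $R_0$ and the vanishing of the relevant terms from $R_1$, giving $1\oti Y\mapsto e^{-\frac{hH}{2}}\oti Y$.

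The harder cases are the two group-like Cartan elements $1\oti e^{\frac{hH}{2}}$ and $1\oti e^{\frac{hG}{2}}$, since here the nilpotent factor $R_1$ contributes nontrivially. The key step is to use the relations $[H,X]=2X$, $[H,Y]=-2Y$, $[G,X]=2X$, $[G,Y]=-2Y$ to compute how $e^{\frac{hH}{2}}$ (resp.\ $e^{\frac{hG}{2}}$) in the second tensor slot intertwines with the $X\oti Y$ terms in $R_1$, producing a shift in the exponent parameter $n$ that effectively telescopes the infinite product. Concretely, conjugating $1\oti e^{\frac{hH}{2}}$ by $R_1$ multiplies each factor $(1+\text{const}\cdot X\oti Y e^{-nh})$ in a way that shifts the summation, and after reindexing the product collapses to the single inverse factor displayed in the statement. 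I expect this telescoping of the infinite product to be the main obstacle: one must track carefully how conjugation shifts $n\mapsto n+1$ (or similar) and confirm that the resulting ratio of products equals $(1+e^{\frac{h}{2}}(e^{\frac{h}{2}}-e^{-\frac{h}{2}})^2 e^{-\frac{hH}{2}}X\otimes Y e^{\frac{hG}{2}})^{-1}$, with the $e^{-\frac{hH}{2}}$ and $e^{\frac{hG}{2}}$ factors arising precisely from commuting the Cartan element past $X$ and $Y$ respectively. Since both $H$ and $G$ satisfy the same bracket relations with $X$ and $Y$ up to sign, the computations for $1\oti e^{\frac{hH}{2}}$ and $1\oti e^{\frac{hG}{2}}$ are nearly identical, which explains why the two displayed answers coincide.
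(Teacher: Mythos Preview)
Your approach is correct and is essentially the same as the paper's. The paper phrases the telescoping step more tersely, invoking the functional equation $f(zq^{-1};q)=(1+zq^{-1})f(z;q)$ for the infinite product $f(z;q)=\prod_{n\ge 0}(1+zq^n)$; this identity is exactly the ``shift $n\mapsto n\pm 1$ and collapse the ratio of products to a single factor'' that you describe for the Cartan generators, applied with $z$ playing the role of a scalar multiple of $X\otimes Y$ and $q=e^{-h}$. Your decomposition $R=R_0R_1$ and separate handling of the Cartan and nilpotent conjugations is the natural way to organize the computation and is implicit in the paper's one-line reference to ``the commutation relations between generators.''
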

The theorem follows immediately from the commutation relations
between generators and from the equation
\[
f(zq^{-1};q)=(1+zq^{-1})f(z;q)
\]
for the function
$$
            f(z;q)=\prod_{n=0}^\infty (1+zq^n).
 $$   

The action of $\rc$ on elements $1\oti X$, $Y\oti 1$,
$e^{\frac{hH}{2}}\oti 1$, and $e^{\frac{hG}{2}}\oti 1$ can be
derived from the formulae above and from the identity (\ref{br}).

The Yang-Baxter equation for $R$ implies the Yang--Baxter equation
for $\rc$:
\[
\rc_{12}\cdot\rc_{13}\cdot\rc_{23}=\rc_{23}\cdot\rc_{13}\cdot\rc_{12}
\]

\section{The algebra $\U$}\label{sec6}

The algebra $\U$ is generated over $\C[t,t^{-1}]$ by
elements $K,L,E$ and $F$ with the following defining relations
\[
KL=LK, \ KE=t^2EK, \ KF=t^{-2}FK,
\]
\[
LE=t^2EL, \ LF=t^{-2}FL,
\]
\[
EF-FE=(t-t^{-1})(K- L^{-1})
\]
The center of $\U$ is generated freely by Laurent polynomials in
$KL^{-1}$ and
\begin{equation}\label{c}
c=EF+Kt^{-1}+L^{-1}t
\end{equation}

This is a Hopf algebra with
\[
\D(K)=K\oti K, \ \D(L)=L\oti L,
\]
\[
\D(E)=E\oti K+1\oti E, \ \D(F)=F\oti 1+L^{-1}\oti F \ .
\]

The map $\phi: \U\to \V$ acting on generators as
\[
\phi(K)=\exp(\frac{hH}{2}), \ \phi(L)=\exp(\frac{hG}{2}),
\phi(t)=e^{\frac{h}{2}}
\]
\[
\phi(E)=(e^{\frac{h}{2}}-e^{-\frac {h}{2}})X, \
\phi(F)=(e^{\frac{h}{2}}-e^{-\frac {h}{2}})Y
\]
extends to a homomorphism of Hopf algebras.

The algebra $\U$ is not quasitriangular. Instead, there
is an outer automorphism of the division ring $\bar{\U^{\oti 2}}$
of $\U^{\oti 2}$ which we denote by the same letter $\rc$ as the
automorphism (\ref{rc}) which acts on generators as
\[
\rc(1\oti K)=(1\oti K)(1+tK^{-1}E\oti FL)^{-1}
\]
\[
\rc(1\oti L)=(1\oti L)(1+tK^{-1}E\oti FL)^{-1}
\]
\[
\rc(E\oti 1)=E\oti L
\]
\[
\rc(1\oti F)=K^{-1}\oti F
\]
Define its action on generators $K\oti 1$, $L\oti 1$, $1\oti E$,
and $F\oti 1$ such that
\[
\rc(\D (a))=\sigma\circ \D(a)
\]
where $a$ is one of the generators of $\U$.

It is clear that the homomorphism $\phi$ brings the outer
automorphism $\rc$ to (\ref{rc}).

\section{The algebra $\ue$}\label{sec7}

Let $\e$ be a primitive root of $1$ of an odd degree $\ell$.
Denote by $\ue$ the specialization of $\U$ to $t=\e$. The
following theorem is a version of the corresponding facts for
simple Lie algebras proved in \cite{DC-K}.

\begin{thm}
\begin{itemize}
\item Elements $E^\ell$, \ $F^\ell$, \ $K^{\pm\ell}$, and
$L^{\pm\ell}$ generate a central
subalgebra $Z_0\subset \ue$.

\item $Z_0$ is a Hopf subalgebra with
\[
\Delta(K^\ell)=K^\ell\oti K^\ell, \ \Delta(L^\ell)=L^\ell\oti
L^\ell,
\]
\[
\Delta(E^\ell)=E^\ell\oti K^\ell+1\oti E^\ell, \
\Delta(F^\ell)=F^\ell\oti 1+L^{-\ell}\oti F^\ell .
\]

\item The algebra $\ue$ is a free $Z_0$-module of dimension
$\ell^4$.

\item The center $Z(\ue)$ is generated by $Z_0$ and by the element
(\ref{c}) modulo the relation
\[
\prod_{j=0}^{\ell-1}(c-K\e^{j+1}-L^{-1}\e^{-j-1})=E^\ell F^\ell
\]

\item Let $\alpha, \beta, a$ and $b$ be coordinates on the group
$B_+\times B_-$ such that for $b_\pm\in B_\pm$ we have:
\[
b_+=\left(\begin{array} {cc}1& \beta \\ 0& \alpha\end{array}\right) \ , \
b_-=\left(\begin{array} {cc}a& 0 \\ b& 1\end{array}\right)
\]
Then the map $F^\ell\to ba^{-1}$, $E^\ell\to \beta$,
$K^\ell\to \alpha$, $L^\ell\to a$ is a homomorphism of Hopf
algebras $Z_0\to C(B_+\times B_-)$

\item $\ue$ is semisimple over a Zariski open subvariety of
$Spec(Z_0)\simeq B_+\times B_-$.

\end{itemize}
\end{thm}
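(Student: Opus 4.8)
The whole theorem rests on the Poincar\'e--Birkhoff--Witt basis for $\ue$ (established by the diamond lemma, or by specialising the generic PBW basis of $\U$): the ordered monomials $F^aE^bK^cL^d$ with $0\le a,b,c,d<\ell$ form a basis over the subalgebra $Z_0$ generated by $E^\ell,F^\ell,K^{\pm\ell},L^{\pm\ell}$. Granting this, the freeness and the dimension $\ell^4$ are immediate once $Z_0$ is shown to be central, so I would first establish centrality by commuting the four generators past $K,L,E,F$. From $KE=\e^2EK$ and $\e^{2\ell}=1$ one gets $K^\ell E=\e^{2\ell}EK^\ell=EK^\ell$, and likewise $K^{\pm\ell},L^{\pm\ell}$ commute with everything. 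For $E^\ell$ the only nontrivial point is commutation with $F$, which follows from
\[
[E^n,F]=(\e-\e^{-1})\Bigl(K\sum_{i=0}^{n-1}\e^{-2i}-L^{-1}\sum_{i=0}^{n-1}\e^{2i}\Bigr)E^{n-1},
\]
obtained by pushing $K,L^{-1}$ through the powers of $E$; at $n=\ell$ both geometric sums vanish because $\e^{2\ell}=1$ while $\e^2\ne1$ (as $\ell$ is odd), giving $[E^\ell,F]=0$, and symmetrically $[F^\ell,E]=0$. This proves the first and third assertions.

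For the Hopf-subalgebra statement I would apply the quantum binomial theorem in $\ue\oti\ue$. The summands of $\D E=E\oti K+1\oti E$ satisfy $(E\oti K)(1\oti E)=\e^2(1\oti E)(E\oti K)$, hence $\D(E)^\ell=\sum_{k}\binom{\ell}{k}_{\e^2}(E\oti K)^k(1\oti E)^{\ell-k}$; since $\e^2$ is again a primitive $\ell$-th root of unity, the Gaussian coefficients $\binom{\ell}{k}_{\e^2}$ vanish for $0<k<\ell$, leaving $\D(E^\ell)=E^\ell\oti K^\ell+1\oti E^\ell$. The cases $F^\ell$ and the grouplikes $K^\ell,L^\ell$ are identical, and closure under counit and antipode is checked on generators, e.g.\ $S(E^\ell)=-E^\ell K^{-\ell}$ using $\e^\ell=1$. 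This is the second assertion.

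The fourth assertion splits into the central identity and the description of $Z(\ue)$. For the identity I would prove by induction on $n$ the telescoping formula
\[
E^nF^n=\prod_{k=1}^{n}\bigl(c-K\e^{-(2k-1)}-L^{-1}\e^{\,2k-1}\bigr),
\]
with base case the definition $EF=c-K\e^{-1}-L^{-1}\e$. The inductive step uses that $c$ is central, that $K$ and $L^{-1}$ commute with every factor, and that moving $E$ rightward past the product shifts each index by $2$ via $EK=\e^{-2}KE$, $EL^{-1}=\e^2L^{-1}E$. At $n=\ell$, multiplication by $2$ permutes $\z/\ell\z$ (since $\ell$ is odd), so the exponents $-(2k-1)$ run once through all residues and the product becomes $\prod_{j=0}^{\ell-1}(c-K\e^{j+1}-L^{-1}\e^{-j-1})$, the claimed identity. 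Since $c$ (and $KL^{-1}$) are already central in $\U$, they remain central in $\ue$; that $Z_0$ and $c$ generate $Z(\ue)$ with no relation beyond this one I would obtain by localising at the field of fractions of $Z_0$ and matching the resulting dimension against the PBW rank $\ell^4$.

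The fifth assertion is a direct comparison of coproducts: the group laws of $B_\pm$ in the given coordinates yield $\D\alpha=\alpha\oti\alpha$, $\D\beta=\beta\oti\alpha+1\oti\beta$, $\D a=a\oti a$ and $\D(ba^{-1})=ba^{-1}\oti1+a^{-1}\oti ba^{-1}$, which match the coproducts of $K^\ell,E^\ell,L^\ell,F^\ell$; as the named functions generate $C(B_+\x B_-)$ with the same relations as the generators of $Z_0$, the map is a Hopf isomorphism and identifies $\Spec(Z_0)$ with $B_+\x B_-$. For the last assertion I would invoke the De Concini--Kac mechanism: $\ue$ is module-finite over $Z_0$, so the reduced discriminant of its trace form is a regular function on $\Spec(Z_0)$ whose non-vanishing locus is Zariski open and consists of characters $\chi$ for which the $\ell^4$-dimensional fiber $\ue\otimes_{Z_0}\C_\chi$ is separable, hence semisimple; nonemptiness I would establish by exhibiting one character at which $c$ has $\ell$ distinct values, forcing the irreducible fibers to attain the maximal dimension. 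I expect this final assertion, together with the exactness half of the center computation, to be the main obstacle: the algebraic identities above are elementary, whereas locating the Azumaya/semisimple locus requires genuine representation-theoretic input in the spirit of \cite{DC-K}.
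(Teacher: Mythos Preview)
The paper does not actually prove this theorem: it is stated with the remark that it ``is a version of the corresponding facts for simple Lie algebras proved in \cite{DC-K}'' and no argument is given. Your proposal is therefore not competing with a proof in the paper but rather supplying one, and what you have written is a correct and well-organised sketch of the standard De Concini--Kac argument, specialised to $gl_2$. The computations you give (the commutator $[E^n,F]$, the quantum binomial expansion of $\Delta(E)^\ell$, the telescoping identity for $E^nF^n$, and the matching of coproducts on $B_+\times B_-$) are all right; in particular your factors $(c-K\e^{-(2k-1)}-L^{-1}\e^{2k-1})$ pairwise commute (they lie in the commutative subalgebra generated by $c,K,L$), so the inductive reordering goes through, and the reindexing to match the paper's product over $j$ is exactly the bijection of $\z/\ell\z$ by multiplication by~$2$ that you invoke.

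The only places where your sketch is genuinely incomplete are the two you flag yourself: that $Z_0$ together with $c$ exhausts the center, and that the semisimple locus is nonempty. Both are handled in \cite{DC-K} by the method you indicate (pass to the fraction field of $Z_0$, count dimensions, and use the discriminant of the reduced trace form), so citing that reference at those two points---as the paper itself does wholesale---is appropriate.
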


Let $x\in GL_2^*$ be an irreducible
$Z_0$-character and $I_x\subset \ue$ be the corresponding ideal.
The quotient algebra
\[
A_x=\ue/I_x
\]
is finite-dimensional of dimension $\ell^4$. There are three
natural structures of a left module on $A_x$. For $a\in \ue$
denote by $[a]$ the class of $a$ in $A_x$. Then these three
actions are:
\begin{itemize}
\item $\pi(a)[b]=[ab]$, \item $\phi(a)[b]=[bS(a)]$, \item
$\psi(a)[b]=[bS^{-1}(a)]$.
\end{itemize}

Assume that $x\in GL_2^*$ is generic, i.e. that
$A_x$ is semisimple. Fix an isomorphism of algebras $\phi_x: A_x
\simeq \oplus_{i=1}^n Mat(k_i)$. For the algebra $\ue$ it is known
\cite{DC-K} that $n=\ell^2$ and $k_i=\ell$ for all $i=1,\dots, n$.
Define
\[
t: A_x\to \C, \ t(a)=\sum_{i=1}^n t_i Tr(\phi^i_x(a))
\]
where $Tr$ is the matrix trace in $Mat(k_i)$ and $\phi^i_x:
A_x\to Mat(k_i)$ is the $i$-th component of $\phi_x$.
It is clear that $t(a)$ does not depend on a particular choice of
$\phi_x$. Indeed, any other such isomorphism differs from $\phi_x$
by an inner automorphism of $\oplus_{i=1}^n Mat(k_i)$. Since
trace is cyclically invariant, the value of $t(a)$ for such
isomorphism will be the same as for $\phi_x$.
Thus, for generic $x$, we have an invariant bilinear form on
$A_x$:
\[
(a,b)=t(ab).
\]
It is a scalar product if $t_i\neq 0$ for each $i=1,\dots, n$.

Fix a scalar product on $A_x$ as above. This
gives an isomorphism of vector spaces $A_x^*\simeq A_x$.
It is easy to verify that the pairing between $\ue$-modules
$(A_x, \phi)$ and $(A_x, \pi)$ given by the map
\begin{equation}\label{e}
e_x: (A_x, \phi)\otimes (A_x, \pi)\to \C
\end{equation}
acting as $ a\otimes b \mapsto t(ab)$ is $\ue$-invariant with
respect to the diagonal action. Indeed, using Sweedler's notation
$\Delta(c)=\sum_c c^{(1)}\otimes c^{(2)}$ for the comultiplication
of element $c$, we have:
\[
\e_x(\sum_c aS(c^{(1)}) \otimes c^{(2)}b)=\sum_c
t(aS(c^{(1)})c^{(2)}b)=\e(c)t(ab)
\]
Similarly
\[
e_x(\sum_c c^{(1)}a\otimes bS^{-1}(c^{(2)}))=\sum_c
t(c^{(1)}abS^{-1}(c^{(2)}))=\e(c)t(ab)
\]
Therefore the map $e_x: (A_x, \pi)\otimes
(A_x, \psi)\to \C$ defined as in (\ref{e}) is also $\ue$-invariant.

Let linear mapping $i_x: \C\to A_x\otimes A_x$ be defined by the formula
\[
i_x(1)\mapsto\sum_i e_i\otimes e^i,
\]
$\{e_i\}$ is a linear basis of $A_x$ and $\{ e^i\}$ the corresponding
dual basis.
It is easy to see that it is a morphism of $\ue$-modules
$\C\to (A_x,\pi)\otimes (A_x,\phi)$ and $\C\to (A_x,\psi)\oti
(A_x,\pi)$. Indeed, let $a_i^j=t(ae_ie^j)$ for any $a\in A_x$, then
\[
\sum_a\sum_ia^{(1)}e_i\oti e^iS(a^{(2)})=\sum_a\sum_{i,j}
(a^{(1)})_i^je_j\otimes e^i(S(a^{(2)}))=\sum_a\sum_{j}
e_j\otimes e^ja^{(1)}S(a^{(2)})=\e(a)\sum_i e_i\oti e^i
\]
which implies the first statement and the second statement
can be proved similarly.

Thus, for the object $(A_x,\pi)$ we have the left dual
$(A_x,\phi)$ (and the right dual $(A_x,\psi)$).

\begin{thm}The subspace $Z_0\oti Z_0\subset \ue\otimes
\ue$ is invariant with respect to the action of the automorphism $\rc$.
\end{thm}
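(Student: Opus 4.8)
The plan is to use that $\rc$ is an algebra automorphism of $\overline{\ue^{\otimes 2}}$, so it suffices to track the images of the eight algebra generators $E^\ell\otimes 1$, $F^\ell\otimes 1$, $K^{\pm\ell}\otimes 1$, $L^{\pm\ell}\otimes 1$ and $1\otimes E^\ell,\dots,1\otimes L^{\pm\ell}$ of $Z_0\otimes Z_0$. Two of them are immediate: since $(A\otimes B)^\ell=A^\ell\otimes B^\ell$ in a tensor product of algebras, the generator formulas give $\rc(E^\ell\otimes 1)=(E\otimes L)^\ell=E^\ell\otimes L^\ell$ and $\rc(1\otimes F^\ell)=(K^{-1}\otimes F)^\ell=K^{-\ell}\otimes F^\ell$, both lying in $Z_0\otimes Z_0$.

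The crux is the $\ell$-th power of the dressed generators. Put $u=tK^{-1}E\otimes FL$, so that $\rc(1\otimes K)=(1\otimes K)(1+u)^{-1}$. First I would record, from the relations $KE=t^2EK$, $KF=t^{-2}FK$, $LE=t^2EL$, $LF=t^{-2}FL$, the single $q$-commutation $u\,a=t^{\pm2}a\,u$ for $a$ any of $1\otimes K^{\pm1}$, $1\otimes L^{\pm1}$, $K^{\pm1}\otimes 1$, $L^{\pm1}\otimes 1$. Combined with $\rc\circ\D=\s\circ\D$ (which forces $\rc(K\otimes K)=K\otimes K$, hence $\rc(K\otimes 1)=(1+u)(K\otimes 1)$, and likewise for $L$), this reduces every relevant $\ell$-th power to the commuting product $\prod_{j=0}^{\ell-1}(1+t^{2j}u)^{\pm1}$, the factors commuting because they are all functions of $u$. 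Here the root-of-unity hypothesis is decisive: as $\ell$ is odd and $\e$ primitive, $t^2=\e^2$ is again a primitive $\ell$-th root of unity, so $\{t^{2j}\}_{j=0}^{\ell-1}$ runs over all $\ell$-th roots of unity and $\prod_{j=0}^{\ell-1}(1+t^{2j}u)=1+u^\ell$. This yields
\[
\rc(1\otimes K^\ell)=(1\otimes K^\ell)(1+u^\ell)^{-1},\qquad \rc(K^\ell\otimes 1)=(1+u^\ell)(K^\ell\otimes 1),
\]
and the same with $L$ in place of $K$.

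It remains to identify the denominator and handle the last two generators. From $K^{-1}E=t^{-2}EK^{-1}$ one gets $(K^{-1}E)^\ell=t^{\ell(\ell-1)}K^{-\ell}E^\ell=K^{-\ell}E^\ell$, similarly $(FL)^\ell=F^\ell L^\ell$, and $t^\ell=1$, so
\[
u^\ell=t^\ell(K^{-1}E)^\ell\otimes(FL)^\ell=K^{-\ell}E^\ell\otimes F^\ell L^\ell\in Z_0\otimes Z_0 .
\]
Since $Z_0$ is central in $\ue$, the element $1+u^\ell$ is central in $\ue\otimes\ue$; this makes $(1+u^\ell)^{-1}$ commute with everything and is what keeps the computation inside the localization $(Z_0\otimes Z_0)[(1+u^\ell)^{-1}]$. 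For the final pair $1\otimes E^\ell$ and $F^\ell\otimes 1$ I would apply $\rc\circ\D=\s\circ\D$ to $E^\ell$ and $F^\ell$ (valid since it holds on the generators and both sides are algebra maps), using $\D(E^\ell)=E^\ell\otimes K^\ell+1\otimes E^\ell$, $\D(F^\ell)=F^\ell\otimes 1+L^{-\ell}\otimes F^\ell$ together with the already-computed values of $\rc$ on $E^\ell\otimes 1$, $1\otimes K^\ell$, $1\otimes F^\ell$, $L^{-\ell}\otimes 1$ to solve for $\rc(1\otimes E^\ell)$ and $\rc(F^\ell\otimes 1)$; centrality of $1+u^\ell$ makes this a one-line rearrangement, and each answer again lies in $(Z_0\otimes Z_0)[(1+u^\ell)^{-1}]$. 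As $\rc$ is an automorphism and these eight images generate, $\rc$ preserves this localization, equivalently the fraction field of $Z_0\otimes Z_0$; this is the precise sense of the invariance claimed (geometrically, $\rc$ descends to the birational set-theoretic $\rc$-map on $\Spec(Z_0\otimes Z_0)\simeq(B_+\times B_-)^2$, the only denominators being powers of the central element $1+u^\ell$). The main obstacle, and the sole place where the root-of-unity assumption enters, is the cyclotomic collapse $\prod_j(1+t^{2j}u)=1+u^\ell$ together with $(K^{-1}E)^\ell=K^{-\ell}E^\ell$, which simultaneously linearizes the $\ell$-th power and deposits the resulting denominator inside $Z_0\otimes Z_0$.
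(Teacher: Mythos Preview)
Your argument is correct and follows the same route as the paper: compute $\rc$ directly on $E^\ell\otimes1$, $1\otimes F^\ell$, $1\otimes K^{\pm\ell}$, $1\otimes L^{\pm\ell}$, then recover the remaining four generators from $\rc\circ\Delta=\sigma\circ\Delta$. You supply the mechanism the paper elides---the cyclotomic collapse $\prod_{j=0}^{\ell-1}(1+t^{2j}u)=1+u^\ell$ together with $u^\ell=K^{-\ell}E^\ell\otimes F^\ell L^\ell\in Z_0\otimes Z_0$---and you are right to note that the invariant object is really the localization $(Z_0\otimes Z_0)[(1+u^\ell)^{-1}]$ (equivalently its fraction field), since $\rc$ is only defined as an automorphism of the division ring.
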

\begin{proof}
From the action  of $\rc$ on generators of $\ue$  and from
the relations between generators we have:
\[
\rc(1\otimes K^\ell)=(1\otimes K^\ell)(1+K^{-\ell}E^\ell\otimes
F^\ell L^\ell)^{-1}  \,
\]
\[
\rc(1\otimes L^\ell)=(1\otimes L^\ell)(1+K^{-\ell}E^\ell\otimes
F^\ell L^\ell)^{-1}  \,
\]
\[
\rc(E^\ell\otimes 1)=E^\ell\otimes L^\ell  \,
\]
\[
\rc(1\otimes F^\ell )=K^{-\ell}\otimes F^{\ell}  \ .
\]
The comultiplication acts on $\ell$-th powers of generators as:
\[
\D(K^\ell)=K^\ell\oti K^\ell, \ \ \D(L^\ell)=L^\ell\oti L^\ell \,
\]
\[
\D(E^\ell)=E^\ell\oti K^\ell+1\oti E^\ell \,
\]
\[
\D(F^\ell)=F^\ell\oti 1+L^{-\ell}\oti F^\ell \ .
\]
These formulae and the defining property $\rc(\D(a))=\sigma\circ
\D(a)$ describe the action of $\rc$ on generators of $Z_0\oti Z_0$.
In particular, it is clear that the image is in $Z_0\oti Z_0$.
\end{proof}

Comparing the action of $\rc$ on generators of $Z_0\oti Z_0$ with
the identification of $Z_0$ and $C(GL_2^*)$ we have the following
statement.
\begin{thm}The automorphism $\rc$ is the pull-back of the mapping
$b:GL_2^*\times GL_2^*\to GL_2^*\times GL_2^*$ acting as follows.
First, identify $GL_2^*$ with the Zariski open subvariety in $GL_2$
via the factorization mapping. Then the mapping $b$ acts as
\[
(x,y)\mapsto (x_L(x,y), x_R(x,y))
\]
where $x_L(x,y)=x_-yx_-^{-1}$ and
$x_R(x,y)=(x_L(x,y))_+^{-1}x(x_L(x,y))_+$.
\end{thm}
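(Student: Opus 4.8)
The plan is to verify, under the identification $Z_0\cong C(GL_2^*)$ recalled above, that the automorphism $\rc$ of $Z_0\oti Z_0$ coincides with the pull-back $b^*$, where $b^*f=f\circ b$. Both $\rc$ and $b^*$ are algebra homomorphisms, and $Z_0\oti Z_0$ is generated as a commutative algebra by $K^{\pm\ell},L^{\pm\ell},E^\ell,F^\ell$ placed in either tensor slot; hence it suffices to check the equality $\rc(f)=f\circ b$ on these eight generators.

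First I would record the coordinate dictionary. Writing $g=g_+g_-^{-1}$ with $g_+=\left(\begin{array}{cc}1&\beta\\0&\alpha\end{array}\right)$ and $g_-=\left(\begin{array}{cc}a&0\\b&1\end{array}\right)$, one multiplies out to obtain $g=\left(\begin{array}{cc}(1-\beta b)/a&\beta\\-\alpha b/a&\alpha\end{array}\right)$, so that on the Zariski-open image of the factorization map one has $\alpha(g)=g_{22}$, $\beta(g)=g_{12}$, $a(g)=g_{22}/\det g$ and $(ba^{-1})(g)=-g_{21}/g_{22}$. By the identification above these four functions are precisely $K^\ell,E^\ell,L^\ell,F^\ell$, which turns every statement about $\rc$ into a statement about matrix entries.

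Next I would make $b$ explicit in coordinates by two elementary factorizations. Fixing $x,y\in GL_2^*$, the first step factorizes the conjugate $x_L(x,y)=x_-yx_-^{-1}$: multiplying the three $2\times2$ matrices and applying the dictionary gives the coordinates of $x_L$ as rational functions of those of $x,y$ (for example $(x_L)_{12}=a(x)\,\beta(y)$ and $(x_L)_{22}=\alpha(y)+b(x)\beta(y)$). The second step factorizes $x_R(x,y)=(x_L)_+^{-1}x(x_L)_+$, a conjugation of $x$ by the upper-triangular matrix $(x_L)_+=\left(\begin{array}{cc}1&\beta(x_L)\\0&\alpha(x_L)\end{array}\right)$ extracted in the first step; the dictionary then yields the four coordinates of $x_R$. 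Together these express all eight coordinates of $b(x,y)=(x_L,x_R)$, i.e. they give $b^*$ on all eight generators.

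Finally I would match $b^*$ with $\rc$ generator by generator. The preceding theorem gives $\rc$ on $1\oti K^\ell,\ 1\oti L^\ell,\ E^\ell\oti 1,\ 1\oti F^\ell$ directly, while the defining property $\rc(\D a)=\s\circ\D a$, together with the coproduct formulas for $K^\ell,L^\ell,E^\ell,F^\ell$ listed there, fixes $\rc$ on the remaining four generators $K^\ell\oti 1,\ L^\ell\oti 1,\ 1\oti E^\ell,\ F^\ell\oti 1$; comparing these with the rational functions produced by the factorizations closes the argument. I expect the difficulty to be bookkeeping rather than ideas: one must carry out the two conjugate-and-refactorize steps without slips and, above all, keep the conventions aligned --- the order of the two tensor factors relative to the pair $(x,y)$ and the flip $P(x,y)=(y,x)$ implicit in passing between the automorphism $\rc$ and the set-theoretic map $b=\check\rc$ --- since it is exactly at this point that a stray transposition is easy to introduce. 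Once $b^*$ is written out on the generators, the remaining verification is routine.
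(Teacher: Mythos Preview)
Your plan is correct and is exactly what the paper does: it states the theorem as an immediate consequence of ``comparing the action of $\rc$ on generators of $Z_0\otimes Z_0$ with the identification of $Z_0$ and $C(GL_2^*)$,'' without writing out any of the computations, so your generator-by-generator verification via the explicit matrix dictionary is precisely the intended (and only reasonable) argument. Your caution about the tensor-factor/$(x,y)$ bookkeeping is well placed --- indeed, with the naive convention one finds e.g.\ $\beta(x_L(x,y))=a(x)\beta(y)$ while $\rc(E^\ell\otimes 1)=E^\ell\otimes L^\ell$ reads as $\beta(x)a(y)$, so the match only holds after the flip that the paper encodes by writing $\rc(I_x\otimes I_y)\subset I_{x_R}\otimes I_{x_L}$; just make that identification explicit when you carry out the check.
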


Let $I_x$ be the ideal in $\ue$ with the $Z_0$ character $x\in
GL_2^*$. From now on we will consider only generic points $x$ and
therefore we can identify $GL_2$ with $GL_2^*$. The two theorems
have an important corollary. The mapping $\rc$ acts as:
\[
\rc(I_x\oti I_y)\subset I_{x_R(x,y)}\oti I_{x_L(x,y)}
\]
This implies that the mapping $\rc$ induces the isomorphism of
algebras:
\[
\rc(x,y): A_x\oti A_y\to A_{x_R(x,y)}\oti A_{x_L(x,y)} \,
\]
This mapping is also an isomorphism of the tensor product of
left $\ue$-modules $(A_x, \pi)$.

Finally, consider the category of $\ue$-modules generated by
tensor products of $(A_x,\pi)$ and their duals. It is clear that
this category is a braided rigid monomial $G$-category with
$G=GL_2^*$ and with the braiding given by the composition
$\sigma\circ \rc$.

\section{Conclusion}

In this paper we have constructed invariants of tangles with flat
connections in their complements. An example of such construction
is described for $GL_2(\C)$. This is rather simple example
related to quantum invariant constructed in \cite{Ka}. More
interesting examples related to irreducible representations of
quantized universal enveloping algebras of simple Lie algebras
will be analyzed in a separate paper.

The complement of a tangle is a rather special 3-manifold. The
construction of invariants of 3-manifolds with $G$-flat
connections in them for any simple Lie group $G$ is the next step.
The case of $G=PSL_2(\C)$ was studied in \cite{BB}.

An interesting, but somewhat speculative question: how to
relate the constructed invariants to a topological quantum field theory
defined ``phenomenologically'' in terms of functional
integrals. We expect that this theory will be Chern--Simons
theory with complex simple $G$. The corresponding boundary
conformal field theory should be complex Wess--Zumino--Witten
theory on a surface with boundary with boundary operators
"parametrized" by elements of $G$.

\end{document}